\theoremstyle{definition}
\theoremstyle{definition}
\theoremstyle{remark}
\numberwithin{equation}{section}
\newcommand\be{\begin{equation*}}
\newcommand\ee{\end{equation*}}
\newcommand\bea{\begin{eqnarray*}}
\newcommand\eea{\end{eqnarray*}}
\newcommand\bi{\begin{itemize}}
\newcommand\ei{\end{itemize}}
\newcommand\ben{\begin{enumerate}}
\newcommand\een{\end{enumerate}}
\newtheorem{thm}{Theorem}[section]
\newtheorem{lem}[thm]{Lemma}
\newtheorem{prop}[thm]{Proposition}
\newtheorem{defi}[thm]{Definition}
\newtheorem{rek}[thm]{Remark}
\newcommand\floor[1]{\left\lfloor#1\right\rfloor}
\newcommand{\R}{\ensuremath{\mathbb{R}}}
\newcommand{\C}{\ensuremath{\mathbb{C}}}
\newcommand{\Z}{\ensuremath{\mathbb{Z}}}
\newcommand{\Q}{\mathbb{Q}}
\newcommand{\N}{\mathbb{N}}
\newcommand{\norm}[1]{\left|\left|#1\right|\right|}
\newcommand{\m}{\textbf{m}}
\newcommand{\w}{\textbf{w}}
\begin{document}

\title[The $RF$-Value for $E(1,a)\hookrightarrow P(\lambda, \lambda b)$]{The Rigid-Flexible Value for Symplectic Embeddings of Four-Dimensional Ellipsoids into Polydiscs}


\author{Alvin Jin}

\address{Massachusetts Intitute of Technology, Cambridge, MA 02139}
\email{alvinjin@mit.edu}

\author{Andrew Lee}
\address{St. Thomas Aquinas College, Sparkill, NY 10976}
\email{alee@stac.edu}

\keywords{Differential geometry, symplectic geometry}

\begin{abstract}

We consider the embedding function $c_b(a)$ describing the problem of symplectically embedding an ellipsoid $E(1,a)$ into the smallest scaling of the polydisc $P(1,b)$. Previous work suggests that determining the entirety of $c_b(a)$ for all $b$ is difficult, as infinite staircases can appear for many sequences of irrational $b$. In contrast, we show that for every polydisc $P(1,b)$ with $b>2$, there is an explicit formula for the minimum $a$ such that the embedding problem is determined only by volume. That is, when the ellipsoid is sufficiently stretched, there is a symplectic embedding of $E(1,a)$ fully filling an appropriately scaled polydisc $P(\lambda,\lambda b)$. Denoted $RF(b)$, this rigid-flexible ($RF$) value is piecewise smooth with a discrete set of discontinuities for $b>2$. At the same time, by exhibiting a sequence of obstructive classes for $b_n = \frac{n+1}{n}$ at $a=8$, we show
that $RF$ is also discontinuous at $b=1$.


\end{abstract}

\maketitle

\tableofcontents

\section{Introduction, Statement of Results}
The problem of embedding one symplectic manifold into another touches on a wide variety of topics in symplectic geometry, and in this work we focus in particular on embeddings of ellipsoids $E(a,b)$ into polydiscs $P(a,b)$ . Here, a polydisc 
\be
P(a,b) := \left\{(z_1,z_2) \in \C^2 \,| \, \pi \|z_1\|^2 < a, \pi \|z_2\|^2 < b \right\} \nonumber
\ee
is the 4-dimensional open symplectic manifold $B_2(a)\times B_2(b)\subset \C^2$, where each factor is a 2-disc of fixed radius centered at $0\in\C$. Similarly the ellipsoid $E(a,b)$ is given by
\be
E(a,b) := \left\{(z_1, z_2)\in\C^2\,|\, \frac{\pi\norm{z_1}^2}{a}+\frac{\pi\norm{z_2}^2}{b} < 1\right\}.\nonumber
\ee
We study the problem of finding embeddings of an ellipsoid into a polydisc. This information is encoded by a function $c_b(a)$ whose value at $a$ is the smallest $\lambda$ such that 
\begin{equation}
E(1,a)\hookrightarrow P(\lambda, \lambda b).\label{embeddingproblem}
\end{equation}
More precisely,
\be
c_b(a):= \inf \{ \lambda > 0 \,|\, E(1,a)\hookrightarrow P(\lambda, \lambda b)\}.\nonumber
\ee
Earlier work on embeddings of ellipsoids into polydiscs \cite{BPT, FrMu} considered a fixed value of $b$ and calculated $c_b(a)$ in its entirety for all $a$. For example, $c_1(a)$ contains a so-called ``infinite staircase," a convergent sequence of $a$-values $a_n\to a_{\infty}$ such that $c_b(a)$ is non-decreasing and linear or constant on each $[a_i, a_{i+1}]$. In addition, they find that for sufficiently large values of $a$, $c_b(a)$ coincides with the volume constraint $\sqrt{\frac{a}{2b}}$: for sufficiently elongated ellipsoids the only obstruction to the existence of symplectic embeddings is the volume constraint.

In later work \cite{CFS, Ush} both the source and target in the embedding problem were allowed to become elongated, with the goal of analyzing the resulting changes in the embedding function $c_b(a)$.

To set notation, define
\begin{equation}
RF(b): = \inf\left\{A\,\,|\,\, c_b(a) = \sqrt{\frac{a}{2b}}\,\,\mbox{for all}\,\, a\geq A\right\}.\nonumber
\end{equation}
We call this number the \emph{rigid-flexible value}, or the $RF$-value. \newline\indent

Varying this extra parameter $b$ uncovers delicate structure in the the $b$-direction for functions $c_b(a)$. In \cite{CFS} the authors find that for $b\in\N$ with $b\geq 2$, $c_b(a)$ contains no infinite staircases and can be computed explicitly for all $a$. On the other hand, \cite{Ush} provides examples of infinite sequences (some arbitrarily large and others converging to 1) of irrational $b$ such that for each such $b$, $c_b(a)$ contains an infinite staircase. Though it is possible to compute the embedding functions $c_b(a)$ for $b\in [1,\infty)$ and $a\in [1,3+\sqrt{2}]$, the rest is unknown in full generality.


These results show that one cannot necessarily ``interpolate" to deduce values of $c_b(a)$ from what is known for $b\in Z$. 
The complicated structure in the embedding function $c_b(a)$ cannot be described as straightforwardly as in \cite{CFS}, where $c_b(a)$ for all $a$ is described with explicit formulas. 
Thus, in the context of other work on this subject, our results for $b>2$ provide a counterpoint to the complexity in \cite{Ush}. The computation of the $RF$-value appears to be a tractable problem, akin to the computation in \cite{CFS} of the embedding functions $c_b(a)$ for integral $b$ and the computation in \cite{Ush} for $a\in [1,3+2\sqrt{2}]$.

In this paper, we focus on the $RF$-value in the polydisc problem \ref{embeddingproblem} for non-integral $b$.

\begin{thm}\label{theoremA}
For $b>2$, $RF(b)$ is given by
\begin{equation}
RF(b) = 2b\left(\frac{2\floor{b+\sqrt{2b}}+1}{b+\floor{b+\sqrt{2b}}}\right)^2.\label{largebRF}
\end{equation}
\end{thm}
As a consequence, the function $RF(b)$ is increasing and piecewise smooth in $b$, with a discrete set of discontinuities.
%
However, the behavior of $RF(b)$ is more delicate for $1<b<2$. In the case $b=1$, \cite{FrMu} shows that $RF(1)=7\frac{1}{32}$
whereas \cite{CFS} shows that $RF(2) = 8\frac{1}{36}$.
In particular $RF(2)$ does not follow the formula above, so already the function $RF:[1,\infty)\to\R$ is not continuous at $b=2$.

In addition, the result below establishes that $RF(b)$ also fails to be continuous at $b=1$. We show that for the sequence $b_n = \frac{n+1}{n}$, we have $\lim_{n\to\infty} RF(b_n)\neq RF(1)$.
\begin{thm}\label{theoremB}
For the sequence $b_n=\frac{n+1}{n}$, with $n\geq 5$, 
\begin{equation}
RF(b_n) = \frac{2(n+1)}{n}\left(\frac{8n^2+8n+1}{2(2n+1)(n+1)}\right)^2.\label{bnRFvalue}
\end{equation}

\end{thm}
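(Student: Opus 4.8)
The plan is to establish the two inequalities $RF(\beta_n)\ge V_n$ and $RF(\beta_n)\le V_n$ separately, where $V_n$ denotes the value in the statement. Throughout I use the standard reduction of the embedding problem to a ball–packing problem: $E(1,a)\hookrightarrow P(\lambda,\lambda b)$ if and only if $\bigsqcup_i B(w_i(a))\sqcup B(\lambda)\sqcup B(\lambda b)\hookrightarrow B(\lambda(1+b))$, where $\w(a)=(w_i(a))$ is the weight expansion of $a$; by the theory of $J$–holomorphic curves in blow\nobreakdash-ups this packing is obstructed precisely by the volume bound $\lambda\ge\f$ together with, for each exceptional class $E=(d,e;\m)$, the inequality $\lambda\ge\frac{\m\cdot\w(a)}{d+eb}$, so that $c_b(a)=\max\!\big(\f,\ \sup_E\tfrac{\m\cdot\w(a)}{d+eb}\big)$ and $RF(b)=\sup\{a:c_b(a)>\f\}$. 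I will also use that $RF(b)$ is finite for $b>1$ — which is what makes it worth computing — so that apart from a uniform large–$a$ estimate only finitely many ``short'' classes must be examined.

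For the lower bound I would produce the obstructive class. Searching among exceptional classes of the balanced shape $\big(d,e;\,M+1,\,M^{\times 7}\big)$ with denominator adapted to $b=\beta_n$ — concretely by imposing $d/e=\beta_n$, which at $b=2$ recovers the class $(6,3;3,2^{\times 7})$ of \cite{CFS} — the Diophantine identities $\m\cdot\m=2de+1$ and $\sum m_i=2(d+e)-1$ force
\[
E_n=\big((2n+1)(n+1),\ n(2n+1);\ n^2+n+1,\ (n^2+n)^{\times 7}\big),
\]
for which $\sum m_i=8n^2+8n+1$ and $d_n+e_n\beta_n=2(2n+1)(n+1)$. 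I would first verify that $E_n$ is a genuine reduced exceptional class, exactly as in \cite{CFS}. Since the weight expansion of any $a\in[8,9)$ begins with eight $1$'s, $\m\cdot\w(a)=8n^2+8n+1$ is constant there, so the $E_n$–obstruction is the horizontal line $\lambda\ge\Lambda_n:=\tfrac{8n^2+8n+1}{2(2n+1)(n+1)}$, which exceeds the volume bound $\sqrt{a/(2\beta_n)}$ exactly for $a<2\beta_n\Lambda_n^2$. A direct computation gives $2\beta_n\Lambda_n^2=V_n$, and since $V_n\in(8,9)$ for every $n\ge1$ the class $E_n$ is obstructive on the non\nobreakdash-empty interval $[8,V_n)$; hence $RF(\beta_n)\ge V_n$.

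For the upper bound I must show that for $b=\beta_n$ no exceptional class obstructs once $a\ge V_n$, i.e.\ $\frac{\m\cdot\w(a)}{d+e\beta_n}\le\sqrt{a/(2\beta_n)}$ for all such $a$ and all $E$. Since $V_n\in(8,9)$, this splits into a uniform estimate for large $a$ — the usual argument that the volume term eventually dominates, which is in any case already needed to know $RF(\beta_n)<\infty$ — and a finite computation on a compact window of $a$–values near $8$. On that window the weight expansion is $1^{\times\floor{a}}$ followed by a short tail, so by the Cremona/quasi\nobreakdash-perfect\nobreakdash-class reduction of \cite{CFS} (as in the proof of the $b\ge2$ formula) one can list the finitely many exceptional classes that could be obstructive there and check by hand that among them $E_n$ is the one whose obstruction meets the volume curve \emph{last}, at exactly $a=V_n$, every other class lying strictly below $\sqrt{a/(2\beta_n)}$ for $a\ge V_n$. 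Together with the lower bound this gives $RF(\beta_n)=V_n$.

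The main obstacle is this last classification\nobreakdash-and\nobreakdash-check: ruling out \emph{every} competing exceptional class for $a\ge V_n$ at $b=\beta_n$. This is exactly where the hypothesis $n\ge5$ (equivalently $\beta_n\le\tfrac65$) enters — it forces the combinatorial regime in which the balanced $8$–weight class $E_n$ is the dominant obstruction; for $2\le n\le4$ a different exceptional class becomes obstructive beyond $V_n$ and the formula changes, while $n=1$ is the polydisc $P(1,2)$ already treated in \cite{CFS}.
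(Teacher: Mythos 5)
Your outline follows the same route as the paper: identify the class $E_n=((2n+1)(n+1),\,n(2n+1);\,n^2+n+1,(n^2+n)^{\times 7})$, observe that its obstruction function equals the constant $\Lambda_n=\frac{8n^2+8n+1}{2(2n+1)(n+1)}$ throughout $[8,9)$ because the weight expansion there begins with $1^{\times 8}$, and locate $RF(\beta_n)$ where this constant meets the volume curve. (Your value $2\beta_n\Lambda_n^2$ is the correct one; the displayed formula in the statement is missing a square, as one sees from the paper's own remark that the limit as $n\to\infty$ is $8$.) However, the proposal has two genuine gaps, both of which you flag but neither of which you close, and they are exactly where the work lies.

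First, the lower bound requires that $E_n$ actually be an exceptional class, i.e.\ that it reduces to $(0;-1,0,\ldots,0)$ under Cremona moves; selecting a class of ``balanced shape'' satisfying the two Diophantine identities is not sufficient, since those identities alone do not characterize $\mathcal{E}_K$. The paper proves effectiveness by an explicit inductive Cremona reduction (after the coordinate change to the blow-up of $\mathbb{CP}^2$, $4n+3$ moves bring $R_n$ to a reduced vector). Second, and more seriously, the upper bound is not ``a finite computation on a compact window'' that can simply be asserted: a priori infinitely many exceptional classes could obstruct somewhere in $[V_n,9)$, and one must (a) use the centering lemma $\ell(a_0)=\ell(\m)$ together with quantitative bounds on the error vector $\epsilon$ (the functions $f,g$ of $q$ and $h$) to cap the denominator $q$ and the coefficients $d,e$ of any candidate class centered in $(8,9)$, reducing to a finite list that is then checked against the block structure of the weight expansion; (b) separately classify the classes centered at $a=8$, which the paper does by a Diophantine argument in the parameters $h=k/n$ and $l$ showing $E_n$ is the \emph{unique} obstructive class there; and (c) establish $RF(\beta_n)\le 9$, which the paper does by running the reduction algorithm on $[9,\infty)$ rather than by a soft ``volume eventually dominates'' claim (note that the paper's own reduction step closes only for $n\ge 8$, a point of tension with the stated $n\ge5$ that your sketch would also have to confront). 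Without (a)--(c) carried out, what you have is a correct strategy, matching the paper's, but not yet a proof.
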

When $n\to\infty$, (\ref{bnRFvalue}) approaches 8, whereas  \cite{FrMu} shows that $RF(1) = 7\frac{1}{32}$.

\subsection{Outline of Paper}
In Sections 3, 4, and 5, we give a proof of Theorem 1.1 by restricting to two distinct intervals of $a$. We first consider values of $a$ less than (\ref{largebRF}) with a positivity of intersection argument, communicated to us by the authors of \cite{CFS}, which suffices to show that $c_b(a)$ on this interval is strictly above the volume constraint until the value of $a$ in (\ref{largebRF}). Then, as Proposition 3.5 (ii) in \cite{CFS} establishes that $RF(b)\leq (\sqrt{2b}+1)^2$, we use the reduction method to show that on the interval from (\ref{largebRF}) to $a=(\sqrt{2b}+1)^2]$, $c_b(a)$ equals the volume constraint.

On this second interval, the proof is again split into two sections, each organized according to the possible orderings of terms in a certain vector. Although this choice of exposition emphasizes that our argument is comprehensive, it also leads to a proliferation of cases.

Then in Section 6, we proceed with the proof of Theorem 1.2. We first establish an upper bound $RF(b_n)\leq 9$ using the reduction method. We then calculate $c_{b_n}(8)$ by exhibiting a sequence of obstructive classes $R_n$ which determine $c_{b_n}(8)$. An argument similar to that in \cite[Section 3.6]{CFS}  then establishes that the only possible obstructive classes on $a\in [8,9)$ are the $R_n$. 

\subsection{Acknowledgments}
We thank Dan Cristofaro-Gardiner for suggesting this problem, and for both his and Felix Schlenk's patience in explaining the arguments in \cite{CFS}. We also thank the anonymous referee for their feedback which greatly improved this work.

\section{Preliminaries: Two Methods for Finding Symplectic Embeddings}
Here, we review the methods we use for detecting symplectic embeddings. This is not an exhaustive list, and more detailed expositions are in \cite{CFS,McSc}, so we review only what we use. The following definition is central to both methods. Fix $b\geq 1$. Since the function $c_b(a)$ is continuous in $a$, it suffices to compute it for $a \geq 1$ rational.
\begin{defi}
The \textbf{weight expansion} $\w(a,b)$ of such an $a$ is the finite decreasing sequence
\be
\w(a) = (1^{\times \ell_0}, w_1^{\times \ell_1},...,w_n^{\times \ell_n})\nonumber,
\ee
where $w_1 = a - \ell_0 < 1, w_2 = 1 - \ell_1 w_1 < w_1$, and so on.

We write $\w(a)$ to mean $\w(1,a)$, and we write $w(a_i)$ to mean the $i$th term of the weight expansion $\w(a)$.
\end{defi}
In \cite[Thm. 1.1]{McD}, it is shown that there is a canonical decomposition of any ellipsoid $E(1,a)$ into a collection of balls
\be
E(1,a) := \coprod_i B(w_i)\nonumber
\ee
where the $w_i$ are the terms in the weight expansion $\w(a)$.
\begin{thm}[{\cite{FrMu}}]
Let $a,b\in\Q$ be positive. There exists a symplectic embedding $E(1,a)\hookrightarrow P(\lambda, \lambda b)$ if and only if there is a symplectic embedding
\be E(1,a)\coprod B(\lambda)\coprod B(\lambda b) \hookrightarrow B(\lambda(b+1)).\nonumber\ee
\end{thm}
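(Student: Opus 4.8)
The plan is to chain together two equivalences. The first, which is the geometric heart of the matter, is
\[ E(1,a)\hookrightarrow P(\lambda,\lambda b)\quad\Longleftrightarrow\quad E(1,a)\coprod B(\lambda)\coprod B(\lambda b)\hookrightarrow B(\lambda(b+1)), \]
with $B(\lambda),B(\lambda b)$ playing the role of fixed ``spectators''. The second is McDuff's theorem \cite{McD}: viewing $B(\lambda),B(\lambda b),B(\lambda(b+1))$ as round ellipsoids, it permits exchanging the $E(1,a)$ factor on the right-hand side for its ball decomposition $\coprod_i B(w_i)$, which yields the statement of the theorem. Since $a$ is rational the weight expansion $w(1,a)$ is finite; and since both embedding conditions above are monotone under enlarging $\lambda$ while the associated embedding functions (one of them $c_b$) are continuous, it is harmless to argue the first equivalence only up to an arbitrarily small enlargement of $\lambda$ whenever a boundary subtlety arises, and I will do so tacitly.

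The input for the first equivalence is the toric ``polytope decomposition'': the moment image of $B(\lambda(b+1))$, the triangle $\{x,y\ge0,\ x+y\le\lambda(b+1)\}$, is the essentially disjoint union of the rectangle $[0,\lambda]\times[0,\lambda b]$ --- the moment image of $P(\lambda,\lambda b)$ --- together with the two corner triangles $\{x\ge\lambda\}$ and $\{y\ge\lambda b\}$, which are affinely the moment images of $B(\lambda b)$ and $B(\lambda)$. This is realized by a volume-filling disjoint symplectic embedding $P(\lambda,\lambda b)\coprod B(\lambda)\coprod B(\lambda b)\hookrightarrow B(\lambda(b+1))$ (the standard construction of a ball/polydisc packing from a polytope decomposition, as used throughout \cite{CFS,FrMu,McSc}). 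The forward direction is then immediate: compose a given embedding $E(1,a)\hookrightarrow P(\lambda,\lambda b)$ with this one, so that $E(1,a)$ lands in the rectangle piece, away from the two corner balls.

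For the converse, suppose $\phi\colon E(1,a)\coprod B(\lambda)\coprod B(\lambda b)\hookrightarrow B(\lambda(b+1))$, and restrict $\phi$ to the two balls to obtain a symplectic packing of $B(\lambda(b+1))$ by balls of capacities $\lambda$ and $\lambda b$. Since the space of symplectic packings of a ball by two balls is connected (a standard consequence of the technology behind \cite{McD}), there is a symplectomorphism $\Theta$ of $B(\lambda(b+1))$ carrying $\phi(B(\lambda))\sqcup\phi(B(\lambda b))$ onto the two standard corner balls of the decomposition above. Then $\Theta\circ\phi$ sends $E(1,a)$ into the complement of those two corner balls, which (after deleting their closures) is the toric manifold with moment polytope $[0,\lambda)\times[0,\lambda b)$, i.e.\ symplectomorphic to $P(\lambda,\lambda b)$; composing gives $E(1,a)\hookrightarrow P(\lambda,\lambda b)$.

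I expect the converse to be where the real work lies. The conceptual crux is the connectedness of two-ball packings of a ball: this is the one point where the Seiberg--Witten/inflation machinery behind \cite{McD} is genuinely invoked, and it must be used in the sharp form producing an \emph{ambient} symplectomorphism of $B(\lambda(b+1))$ that normalizes the two spectator balls to the toric position. The residual technical care is that the complement of the standard corner balls agrees with $P(\lambda,\lambda b)$ only away from a set of measure zero (the boundary spheres), which is why the comparison is best run at the level of the embedding functions --- showing that $c_b(a)$ equals $\inf\{\lambda:\coprod_i B(w_i)\coprod B(\lambda)\coprod B(\lambda b)\hookrightarrow B(\lambda(b+1))\}$ --- the equivalence for a fixed $\lambda$ then following because the two conditions cut out the same half-line of $\lambda$.
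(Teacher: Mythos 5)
The paper does not actually prove this statement: it is quoted verbatim from \cite{FrMu} (it is their reduction of the polydisc problem to a ball-packing problem), so there is no in-paper argument to compare yours against. Judged on its own, your reconstruction is essentially the proof that appears in \cite{FrMu} and in \cite{McSc, CFS}: the forward direction via the decomposition of the moment triangle of $B(\lambda(b+1))$ into the rectangle $[0,\lambda]\times[0,\lambda b]$ and the two corner triangles of sizes $\lambda$ and $\lambda b$; the converse via McDuff's connectedness-of-ball-packings theorem (from ``From symplectic deformation to isotopy''), upgraded by the symplectic isotopy extension theorem to an ambient symplectomorphism normalizing the two spectator balls to the corner position; and finally McDuff's theorem from \cite{McD} to trade $E(1,a)$ for $\coprod_i B(w_i)$ inside the disjoint union. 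You correctly identify the two genuine pressure points: (i) the converse is where the hard input lives, and the isotopy result must be applied in its ambient form (strictly speaking one compactifies to $\mathbb{CP}^2$ to invoke it, then restricts back); and (ii) the corner preimages are only balls up to an arbitrarily small volume loss, so the fixed-$\lambda$ equivalence is recovered from the $\epsilon$-perturbed one by monotonicity in $\lambda$ and continuity of $c_b$. Two small cleanup remarks: the hypothesis ``$a,b,c,d\in\Q$'' in the quoted statement is vestigial ($c,d$ never appear, and only the rationality of $a$ is used, to make the weight expansion finite), and $B(a,b)$ in the display should be read as the ball decomposition of $E(1,a)$, i.e.\ $\coprod_i B(w_i)$ with $w_i$ the weight expansion of $a$ --- which is how you read it.
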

This reduces the polydisc problem to a ball-packing problem of embedding balls of capacity $e_i$ into a ball of capacity $\mu$:
\begin{equation}
\coprod_i B(e_i)\hookrightarrow B(\mu).\label{ballembedding}
\end{equation}
The purpose of introducing these constructions is that a collection of balls can be embedded symplectically precisely when the associated multiple blow-up of $\mathbb{C}P^2$ carries a symplectic form in a certain cohomology class. We denote by $\overline{\mathcal{C}}_K(X_n)$ the set of cohomology classes represented by symplectic forms for which the anticanonical class is $K=-3L+\sum_i E_i$, where $L$ is Poincar\'e dual to a line in $\mathbb{C}P^2$ and each $E_i$ is dual to the $i$th exceptional sphere. By \cite{McPo}, the embedding (\ref{ballembedding}) exists when the following cohomology class is in the symplectic cone:
\be
\mu L - \sum_i e_i E_i \in \overline{\mathcal{C}}_K(X_n).\newline\indent
\ee
The above fact gives a sufficient criterion for a class to lie in $\overline{\mathcal{C}}_K(X_n)$. If there is a symplectic form in a given class, it must have non-negative intersection with certain holomorphic curves. By \cite{LiLiu}, this is also sufficient: if $\mathcal{E}_K(X_n):=\{e\in H_2(X_n)\,|\,  \langle e, e\rangle = -1,\,\,\langle K, e\rangle = -1\}$ is the set of exceptional classes, then we may characterize the symplectic cone referenced above as
\begin{equation}
\overline{\mathcal{C}}_K(X_n)=\{\omega\in H^2(X_n)\,|\,  \langle \omega, e\rangle\geq 0\,\, \forall e\in \mathcal{E}_k(X_n)\}. \label{symplecticcone}
\end{equation}
In summary, an embedding of the form (\ref{ballembedding}) exists when the associated weight expansion represents a cohomology class in the symplectic cone $\overline{\mathcal{C}}_K(X_n)$, and the equality (\ref{symplecticcone}) characterizes elements of the symplectic cone as those which pair non-negatively with so-called exceptional classes $\mathcal{E}_k(X_n)$.

We now require a condition for determining when homology classes are exceptional. With respect to the basis of $H^2(X_n;\R)\simeq \R^{n+1}$ given above, a \emph{Cremona transform} is the map given by
\be
(d; m_1,...m_n)\mapsto (2d-m_1-m_2-m_3;d-m_2-m_3, d-m_1-m_3, d-m_1-m_2, m_4,...,m_n)\nonumber
\ee
We will use Cremona transforms in two different contexts below. First, we can state the condition for an integral homology class to lie in $\mathcal{E}_k$, which is proven in \cite{McSc} based on work of \cite{LiLi, LiLiu}. 
\begin{thm}
A class $(d;m_1,...,m_n)\in H_2(X_n;\Z)$ is in $\mathcal{E}_K(X_n)$ if and only if its entries satisfy the Diophantine equations
\begin{eqnarray}
& & 3d-1 = \sum_i m_i \label{diophantine} \newline\\
& & d^2+1 = \sum_i m_i^2 \nonumber
\end{eqnarray}
and $(d;m_1,...,m_n)$ reduces to $(0;-1,0, \ldots, 0)$ after a sequence of  Cremona transforms.
\end{thm}

For the problem of embedding ellipsoids into polydiscs, the natural compactification of $P(a,b)$ adds a single point to each disc, yielding $S^2\times S^2$. This manifold is in fact diffeomorphic to the 2-fold blow-up of $\mathbb{C}P^2$, so the $n$-fold blow-up of $S^2\times S^2$ (denoted $Y_n$) can be identified with $X_{n+1}$. The induced isomorphism on cohomology $\psi:H^2(Y_n)\to H^2(X_{n+1})$ is given by
\be
(d,e;m_1,m_2,m_3,...,m_n)\mapsto (d+e-m_1; d-m_1, e-m_1, m_2,...,m_n)
\ee

Using this isomorphism we may translate the Diophantine equations (\ref{diophantine}) into the following conditions.
\begin{eqnarray}
\sum_i m_i & = & 2(d+e) -1 \label{dioph1}\newline\\
\sum_i m_i^2 & = & 2de+1 \label{dioph2}
\end{eqnarray}



\subsection{Obstructive Classes}\label{method1}
We can now describe the first of the two methods we employ to find symplectic embeddings. 
If $A := (d,e;\textbf{m})\in \mathcal{E}_k$ with the basis given previously, the condition
\begin{equation}
\lambda = \frac{\langle \textbf{w}(a), \textbf{m}\rangle}{d+be} >0 \label{lambda}
\end{equation}
is equivalent to
\begin{equation}
\lambda(d+be) - \langle \textbf{w}(a), \textbf{m}\rangle >0.
\end{equation}

The following statement is \cite[Method 1']{CFS}.

\begin{thm}\label{classcondition}
An embedding (\ref{embeddingproblem}) exists iff $\lambda\geq \sqrt{\frac{a}{2b}}$ and 
\be
\mu_b(d,e;m):= \frac{\sum_i m_i \cdot w_i(a)}{d+be} \leq \lambda\label{obstr}
\ee
for every $(d,e;m_1,...,m_n)\in H_2(Y_n;\Z)$ which satisfies equations (\ref{dioph1}, \ref{dioph2}) and reduces to $(0;-1)$ after some sequence of Cremona transforms.
\end{thm}

\begin{defi}
We say that a class $A$ is \textbf{obstructive at $a>0$} if the quotient in (\ref{lambda})
is larger than the volume constraint $\sqrt{\frac{a}{2b}}$.
\end{defi}
To use this method, then, we must find all obstructive classes. In practice, this method proceeds first by finding a discrete subset of $a$ which have a special relationship to the obstructive classes, described as follows.

If the graph of $c_b(a)$ does not follow the volume constraint, then locally it must be given by the obstruction function of some class $(d,e;\m)$. Restricting to the interval where this class determines the graph, \cite{McSc} Lemma 2.1.3 states that there is a particular $a$-value on the interval whose weight expansion coincides with the number of positive entries of $\m$, the tail of the obstructive class.

\begin{lem}\label{lengthbound}
Let $C = (d,e;\textbf{m})$ be an exceptional class, and let $I$ be the maximal nonempty open set on which $\mu_{a,b}(C)>\sqrt{\frac{a}{2b}}$ for all $a\in I$. Then, there is a unique element $a_0\in I$ such that the length of the weight expansion for $a_0$, denoted $\ell(a_0)$, satisfies $\ell(a_0) = \ell(\textbf{m})$, and moreover $\ell(a)>\ell(\textbf{m})$ if $a\in I\setminus\{a_0\}$.
\end{lem}

The following quantities will be relevant going forward: let $\ell_0$ denote the number of $1$'s in the weight expansion of $a$ and subsequently let $\ell_i$ denote the lengths of subsequent blocks. When a class $(d,e;\w(a))$ is obstructive, we have a vector of error terms $\epsilon$ defined as
\be
\m = \frac{d+be}{\sqrt{2a}}\w(a) + \epsilon.
\ee
Each contribution to this error, thought of as the difference between $\m$ and $\frac{d+be}{\sqrt{2a}}\w(a)$, will be important, so we define $v_i = \frac{d+be}{\sqrt{2a}}w_i$ for $i=0,...,M$ where $M$ denotes the length of the weight expansion of $a$.\newline\indent
Also let
\be
\sigma = \sum_{\ell_0 + 1}^M \epsilon_i \nonumber
\ee
denote the ``residual error," the contribution to the error vector coming from non-integer terms of $\w(a)$. Also, define a related quantity 
\be
\sigma' = \sum_{\ell_0 + 1}^{M-\ell_N} \epsilon_i \nonumber.
\ee
Here $\ell_N$ is the length of the last block. This quantity ignores the contribution to the error from the smallest part of the weight expansion $\w(a)$.  \newline\indent
With these terms established, the following results (proven in \cite{CFS,McSc}) will be used repeatedly.

\begin{lem}\label{degfacts}
For all $(d,e;\m)\in\mathcal{E}$, suppose that $a\in\Q$ and $b\in\R$ with $a,b>1$.
Then we have
\begin{enumerate}
    \item $\mu_b(d,e;\m)(a)\leq\sqrt{a}\sqrt{\frac{2de+1}{(d+be)^2}}$\label{errorsumlesssigmal}
    \item $\mu_b(d,e;\m)(a)>\sqrt{\frac{a}{2b}}$ if and only if $\epsilon\cdot\w>0$\label{errorsumlesssigmapl}
    \item If $\mu_b(d,e;\m)(a)>\sqrt{\frac{a}{2b}}$ then $d=be+h$ where $|h|<\sqrt{2b}$ and $\langle \epsilon, \epsilon\rangle<1-\frac{h^2}{2b}$\label{errorbound}
    \item Let $y(a) = a+1-2\frac{b+1}{\sqrt{2b}}\sqrt{a}$, where $a=p/q$ is rational. Then \begin{equation} -\sum_i\epsilon_i = 1+\frac{d+be}{\sqrt{2ab}}\left(y(a)-\frac{1}{q}\right)\label{errorsumformula}\end{equation}
\end{enumerate}
\end{lem}
\begin{proof}
The proofs of (1)-(3) are as in \cite[Prop. 3.1]{CFS}. A more general version of (4) is given in Equation (4.4) in \cite{CHPM}. The form appearing here follows from choosing the convex toric domain to be the polydisc $P(\lambda, \lambda b)$.
\end{proof}

With these prerequisites in hand, the second step of the obstructive class method establishes that in fact only finitely many classes must be checked at the values of $a$ characterized in Lemma \ref{lengthbound}. This is the purpose of the inequality (\ref{ebound}) below.

\begin{lem}\label{errorestimates}
Let $(d,e;\textbf{m})$ be an exceptional class such that $\ell(a) = \ell(m)$ and $\mu_b(d,e;\m) > \sqrt{\frac{a}{2b}}$, and $b\in[1,2]$. Set $v_M := \frac{(d+be)\sqrt{2b}}{q(b+1)\sqrt{a}}$ where $q$ is the last denominator in the weight expansion $\w(a)$. Then
\begin{enumerate}
    \item $\left|\sum_i \epsilon_i\right| \leq \sqrt{\sigma L}$ 
    \item If $v_M<1$, then $\left| \sum_i \epsilon_i\right|\leq \sqrt{\sigma'L}$, 
    \item If $v_M\leq \frac{1}{2}$, then $v_M> \frac{1}{3}$ and $\sigma'\leq \frac{1}{2}$. If $v_M\leq \frac{2}{3}$, then $\sigma'\leq \frac{7}{9}$.\label{vmsigmabounds}
    \item With $\delta = y(a)-\frac{1}{q}$ and $y(a) = a+1-2\frac{b+1}{\sqrt{2b}}\sqrt{a}$, we have \label{qbound}
    \begin{equation}\label{qboundformula}
    \sqrt{q + \floor{a} + 1} \geq 1 + \delta v_M q
    \end{equation}
One can also show the bound from \cite{CFS, FrMu}, which is
\begin{equation}
2be+h \leq \frac{\sqrt{2ba}}{\delta}\left(\sqrt{\sigma q} - (1-h(1-\frac{1}{b})\right) \leq \frac{\sqrt{2ba}}{\delta}\left(\frac{\sigma}{\delta v_M}-\left(1-h(1-\frac{1}{b}\right)\right) \label{ebound}
\end{equation}
\end{enumerate}
\end{lem}

\begin{proof}[Proof of Lemma \ref{errorestimates}]
(1) and (2) are as in \cite[Lemma 5.1.2]{McSc}. Though Claim (3) involves terms which depend on $b$, the same proof for fixed $b$ goes through as in \cite[Lemma 5.1.2 (iii)]{McSc} and \cite[Lemma 3.7 (iii)]{CFS}.

To show (4), we have from Equation (\ref{errorsumformula}) that
\be
-\sum_i \epsilon_i =  \frac{2be+h}{\sqrt{2ba}}(a+1-\frac{b+1}{b}\sqrt{2ba}-\frac{1}{q}) + (1-h(1-\frac{1}{b})\nonumber
\ee

Using $q\geq 1\geq \sigma$ and (1) of this lemma,
\bea
\sqrt{qL} &\geq & \sqrt{\sigma L}\newline\\
& \geq & \frac{2be+h}{\sqrt{2ba}}(a+1-\frac{b+1}{b}\sqrt{2a}-\frac{1}{q}) + (1-h(1-\frac{1}{b})\newline\nonumber\\
& = & \frac{2be+h}{\sqrt{2ba}}\delta + (1-h(\frac{1}{b}))\newline\nonumber\\
& > & \delta v_M q\nonumber
\eea
where the first line is by (1), the second is by definition of $\delta$, and the last comes from $b\in[1,2)$.\newline\indent
It follows that $\sqrt{q}<\frac{\sqrt{\sigma}}{\delta v_M}$. Rearranging the above inequality we see
\bea
2be+h & \leq & \frac{\sqrt{2ba}}{\delta}\left(\sqrt{\sigma q} - (1-h(1-\frac{1}{b}))\right)\newline\nonumber\\
& < & \frac{\sqrt{2ba}}{\delta}\left(\frac{\sigma}{\delta v_M} - (1-h(1-\frac{1}{b})\right)
\eea
\end{proof}

The following lemma, \cite[Lem. 2.1.8]{McSc} will also be necessary for our arguments.
\begin{lem}\label{oneblock}
Assume that $(d,e;\m)$ is an exceptional class such that $\mu(d,e;\m)>\sqrt{\frac{a}{2b}}$ for some $a$. Let $J:=\{k,...,k+s-1\}$ be a block of $s-1$ consecutive integers for which $w(a_i), i\in J$ is constant. Then $(m_1,...,m_{k+1})$ is of the form
\bea
(m,...,m)\newline\nonumber\\
(m-1,m,...,m)\newline\nonumber\\
(m,...,m+1).\nonumber
\eea
Moreover, there is at most one block of length $s\geq 2$ on which the $m_i$ are not all equal, and if $m_1\neq m_{k+1}$, then $\sum_{i=k}^{k+s-1}\epsilon_i^2 \geq \frac{s-1}{s}$.
\end{lem}

\subsection{Reduction at a Point}\label{method2}
Though the first method is a necessary and sufficient condition for the embedding of an ellipsoid into a polydisc, in principle one might need to check more exceptional classes than is computationally feasible. The following method straightforwardly determines whether an embedding exists but the complexity of the process depends strongly on the value of $a$.\newline\indent

\begin{defi}
The \textbf{defect} $\delta$ of an ordered vector $(d;m_1,...,m_n)$ is the sum $d-m_1-m_2-m_3$.
\end{defi}
The following is established in \cite{BuPi, KaKe}.
\begin{thm}
An embedding $E(1,a)\hookrightarrow P(\lambda, \lambda b)$ exists if there exists a finite sequence of Cremona moves that transforms the ordered vector
\be
((b+1)\lambda; b\lambda, \lambda, \w(a))\nonumber
\ee
to an ordered vector with non-negative entries and defect $\delta\geq 0$.
\end{thm}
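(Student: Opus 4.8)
The plan is to unwind the statement through the two equivalences recorded above and then to show directly that a vector which Cremona-reduces to one with non-negative entries and non-negative defect represents a point of the symplectic cone. First I would invoke the Frenkel-M\"{u}ller equivalence \cite{FrMu} to replace $E(1,a)\hookrightarrow P(\lambda,\lambda b)$ by the ball packing $B(1,a)\coprod B(\lambda)\coprod B(\lambda b)\hookrightarrow B(\lambda(b+1))$, and then the criteria around (2.2) (McDuff-Polterovich \cite{McPo} and Li-Liu) to reduce that packing to the single assertion that the class $v$ with ordered-vector coordinates $((b+1)\lambda;\,b\lambda,\lambda,W(1,a))$ lies in $\overline{\mathcal{C}}_K(X_n)$; that is, $\langle v,v\rangle\ge 0$ and $\langle v,e\rangle\ge 0$ for every $e\in\mathcal{E}_K(X_n)$.

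Next I would record that each Cremona move is an element of the Weyl group of $X_n$: it is the reflection $R_c(x)=x+\langle x,c\rangle c$ in the $(-2)$-class $c:=L-E_1-E_2-E_3$ (one checks $\langle c,c\rangle=-2$, $\langle K,c\rangle=0$, and that for an ordered vector $x$ the number $\langle x,c\rangle$ is exactly its defect), and the moves used in a reduction are compositions of such reflections with permutations of the $E_i$. All of these fix $K=-3L+\sum_i E_i$ and preserve $\langle\cdot,\cdot\rangle$, hence permute $\mathcal{E}_K(X_n)$ and preserve $\overline{\mathcal{C}}_K(X_n)$. In particular the self-intersection is a Cremona invariant, equal to $\langle v,v\rangle=2b\lambda^2-a$; this is $\ge0$ precisely for $\lambda\ge\sqrt{a/2b}$, the volume bound, which we take to hold since it must for any embedding to exist. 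So $v\in\overline{\mathcal{C}}_K(X_n)$ if and only if the terminal reduced vector $w=(d;m_1\ge\cdots\ge m_n\ge 0)$, $d\ge m_1+m_2+m_3$, does, and it remains to show such a $w$ pairs non-negatively with every $e\in\mathcal{E}_K(X_n)$.

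For that I would argue by contradiction. Among exceptional classes pairing negatively with $w$, if any exist, pick $e$ of minimal degree $d_e$; since $w$ is ordered decreasingly, reordering the blow-ups so that $e=(d_e;p_1\ge\cdots\ge p_n)$ is too only lowers $\langle w,e\rangle$ (rearrangement inequality) and keeps the degree minimal. If $d_e=0$ then $e=E_k$ and $\langle w,E_k\rangle=m_k\ge 0$, a contradiction, so $d_e\ge 1$. Now I use the standard fact that a positive-degree class of $\mathcal{E}_K(X_n)$, written with non-increasing entries, has $p_1+p_2+p_3>d_e$, i.e.\ $\langle e,c\rangle\le -1$, whereas the defect hypothesis on $w$ gives $\langle w,c\rangle=d-m_1-m_2-m_3\ge 0$. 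Applying $R_c$ to $e$ (whose three largest entries now occupy positions $1,2,3$) gives $R_c(e)=e+\langle e,c\rangle c\in\mathcal{E}_K(X_n)$, of degree $d_e+\langle e,c\rangle<d_e$, with $\langle w,R_c(e)\rangle=\langle w,e\rangle+\langle e,c\rangle\,\langle w,c\rangle\le\langle w,e\rangle<0$ (using $\langle e,c\rangle\le 0$, $\langle w,c\rangle\ge 0$). So $R_c(e)$ is an exceptional class of strictly smaller degree still pairing negatively with $w$, contradicting minimality. Hence $\langle w,e\rangle\ge 0$ for all $e$, so $w$, and therefore $v$, lies in $\overline{\mathcal{C}}_K(X_n)$; by the McDuff-Polterovich criterion and the Frenkel-M\"{u}ller equivalence the embedding exists.

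The ingredient left unjustified, and which I expect to be the main point, is the ``standard fact'' above: the only members of $\mathcal{E}_K(X_n)$ with non-negative defect (when written with non-increasing entries) are the generators $E_k$, equivalently every positive-degree exceptional class satisfies $d_e<p_1+p_2+p_3$. This is exactly what makes both the Cremona reduction of exceptional classes terminate and the descent above strictly decrease $d_e$; it is deduced from the Diophantine relations $\sum_i p_i=3d_e-1$, $\sum_i p_i^2=d_e^2+1$ by an elementary but somewhat delicate estimate, and appears in \cite{McSc} (and is used again in \cite{BuPi,KaKe}). The remaining ingredients, the two equivalences and the Weyl-group invariance, are essentially formal, so this combinatorial lemma is the heart of the matter.
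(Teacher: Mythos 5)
The paper does not actually prove this statement: it is quoted from the literature (``The following is established in \cite{BuPi, KaKe}''), so there is no in-paper argument to compare against. Your reconstruction is essentially the standard proof from that literature and it is correct in outline: the two reductions (Frenkel--M\"uller to a ball packing, then McDuff--Polterovich/Li--Liu to membership in $\overline{\mathcal{C}}_K(X_n)$) are exactly the ones set up in Section 2 of the paper; the observation that a Cremona move is the Weyl reflection $R_c$ in the $(-2)$-class $c=L-E_1-E_2-E_3$, with $\langle x,c\rangle$ equal to the defect, is right and is what makes cone membership a Cremona invariant; and the descent on the degree of a hypothetical exceptional class pairing negatively with the reduced vector is the standard way to finish. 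You also correctly isolate the one nontrivial combinatorial input --- that every positive-degree class of $\mathcal{E}_K(X_n)$, written with non-increasing entries, has negative defect --- which is Lemma 3.4 of \cite{McSc} (equivalently part of the Li--Li reduction theory), so leaving it as a citation is appropriate.

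One point deserves more care than you give it. Membership in the symplectic cone requires not only $\langle v,e\rangle\ge 0$ for all exceptional $e$ but also $\langle v,v\rangle\ge 0$, and the latter, being $2b\lambda^2-a$, is a Cremona invariant that is \emph{not} implied by the hypothesis that the reduction terminates in a vector with non-negative entries and non-negative defect (e.g.\ $(3;1^{\times 10})$ is reduced with square $-1$). Your sentence ``which we take to hold since it must for any embedding to exist'' is circular as written: the embedding is the conclusion, not a hypothesis. The honest fix is to add $\lambda\ge\sqrt{a/2b}$ as a standing hypothesis of the theorem (it is implicit in the paper, which only ever applies the criterion at $\lambda=\sqrt{a/2b}$, where the square is exactly $0$ and the class sits on the boundary of the cone). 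With that hypothesis made explicit, your argument goes through.
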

We will apply this repeatedly in the proofs of both Theorem 1.1 and Theorem 1.2.

Notice the requirement that the Cremona moves be performed on ordered vectors. The following fact \cite[Prop. 2.2]{CFS} allows us to avoid re-ordering.

\begin{prop}
Let $\alpha=(\mu;a_1,...,a_n)$ be a vector with $\mu\geq 0$ and $\alpha^2\geq 0$ and assume that there is a sequence $\alpha = \alpha_0,...,\alpha_m$ of vectors such that $\alpha_{j+1}$ is obtained from $\alpha_j$ by a sequence of Cremona moves. If $\alpha_m = (\hat{\mu};\hat{\alpha}_1,...,\hat{\alpha}_n)$ is reduced and $\hat{\alpha}_1,...,\hat{\alpha}_n\geq 0$, then $\alpha\in\overline{\mathcal{C}}_K(X_n)$.
\end{prop}

Thus to find embeddings we need only prove enough about the ordering to ensure that the defect at a certain step is non-negative, provided that we have non-negativity of the terms in each vector. So, only knowledge of the ordering for the first three terms is strictly necessary. We will usually verify the ordering at each step, except when doing so results in distinguishing too many cases.

\section{Positivity of intersection: $a=2n_b+1$ and $b>2$}\label{2nb+1}
In this section we begin examining the $RF$-value in the case of non-integers, recalling from \cite{CFS} the classes which determine the $RF$-value when $b>2$. We thank the authors of \cite{CFS} for communicating the argument of this section, which was used initially for $b\in\N$.

In what follows, let
\be
n_b =\floor{\sqrt{2b}+b}.\label{nb}
\ee
The associated obstructive class is given by 
\be
(n_b, 1; 1^{\times 2n_b+1}),\nonumber
\ee
which corresponds to the class $E_{n_b}$ in \cite{CFS}. In this notation, we may restate Theorem \ref{theoremA} as $RF(b) = 2b\cdot \mu_b(E_{n_b})^2$.
 \newline\indent

\subsection{Calculations when $a=2n_b+1$}
We show in this section using the arguments of Section \ref{method1} that the $E_n$ are the only obstructions relevant when $b> 2$ and $a=2n_b+1$.

The key fact we use is positivity of intersection for the homology classes $\mathcal{E}$. As they are represented by symplectically embedded spheres, the intersection of any two distinct classes must be positive. This general fact is stated in \cite[Prop. 3.8 (ii)]{FrMu}.
\begin{prop}
For all distinct $(d,e;\textbf{m})\in \mathcal{E}$ we have
\begin{equation}
\sum_i m_i m_i'\leq de'+d'e.
\end{equation}
\end{prop}

\begin{prop}
For $b>2$, the only obstructive class at $a=2n_b + 1$ is $E_{n_b}$.
\end{prop}

\begin{proof}
Suppose that there is an obstructive class $(d,e;\textbf{m})$ other than $E_{n_b}$ at $2n_b+1$. Then it must have positive intersection with $E_{n_b}$, so
\be
0 \geq d + n_b e - \langle \textbf{m}, 1^{\times 2n_b+1}\rangle\nonumber 
\ee
and hence $\langle \textbf{m}, 1^{\times 2n_b+1}\rangle \leq d+n_b e$.
Then we can bound its obstruction function above as follows:
\bea
\mu(d,e;\textbf{m})(2n_b+1) & = & \frac{\langle \textbf{m}, 1^{\times 2n_b+1}\rangle}{d + b e}\nonumber\\
& = & \frac{\sum_i m_i}{d+b e}\nonumber \\
& \leq & \frac{d+n_b e}{d+b e}\nonumber \label{keybound}
\eea
We wish to show that this is no larger than $\frac{2n_b+1}{b + n_b}$, the constraint arising from $E_{n_b}$. Subtracting one from both sides of the inequality so $d$ appears only once, this amounts to
\bea
\frac{d+n_b e}{d+b e} & \leq & \frac{2n_b+1}{b + n_b} \nonumber\\
\frac{(n_b-b)e}{d+b e} & \leq & \frac{n_b-b+1}{n_b + b}\nonumber\\
(n_b+b)(n_b-b)e & \leq & (b e+d)(n_b-b+1)\nonumber
\eea
When $d\geq b e$, we see that the right-hand side can be bounded below, so it suffices to show that
\bea
(n_b+b)(n_b-b)e & \leq & 2b e(n_b-b+1)\nonumber\\
(n_b+b)(n_b-b) & \leq & 2b (n_b-b+1)\nonumber\\
n_b^2 - b^2 & \leq & 2b n_b - 2b^2 + 2b\nonumber\\
n_b^2 - 2b n_b +b^2 & \leq & 2b\nonumber \\
(n_b -b)^2 & \leq & 2b\nonumber\\
n_b-b & \leq & \sqrt{2b}\nonumber\\
n_b & \leq & b + \sqrt{2b}\nonumber
\eea
which follows immediately from the definition of $n_b$. Hence, we may assume going forward that $d<b e$.

Since the weight expansion of $a=2n_b+1$ is $1^{\times 2n_b+1}$, then by Lemma \ref{oneblock} there are only 3 possibilities for $\textbf{m}$.
\begin{equation}
\textbf{m}=\begin{cases}
M^{\times 2n_b +1 } \\
(M+1, M^{\times 2n_b}) \\
(M^{\times 2n_b}, M-1)
\end{cases}
\label{Mcases}
\end{equation}
By Lemma \ref{lengthbound}, $\textbf{m}$ has length no larger than $2n_b+1$. If $\textbf{m}$ has length less than $2n_b+1$, the last entry is 0, so $M=1$. But then $\sum_i m_i = 2n_b$ while $\sum_i m_i = 2(d+e)-1$. Hence $M>0$, and the length $\textbf{m}$ is exactly $2n_b+1$.

We start with the first case in (\ref{Mcases}). Using the Chern number 1 condition, we obtain
\be
((2n_b+1)M)^2 = (2(d+e)-1)^2 = 4(d+e)^2-4(d+e)+1.\nonumber
\ee
On the other hand, $(2n_b+1)M^2= 2de+1$ by the self-intersection condition. Substituting this in the left-hand side,
\bea
(2n_b+1)(2de+1) & = & 4(d+e)^2-4(d+e)+1\\
(2n_b+1)(2de+1) +4d + 4e -1 & = & 4d^2+8de+4e^2.\nonumber
\eea
Dividing both sides by $4de$, the result is
\begin{eqnarray}
(2n_b+1)\frac{(2de+1)}{4de} +\frac{1}{e} + \frac{1}{d} -\frac{1}{4de} & = & \frac{d}{e}+2+\frac{e}{d}\nonumber\\
\frac{2n_b+1}{2} + \frac{2n_b+1}{4de} +\frac{1}{e} + \frac{1}{d} -\frac{1}{4de}  & = & \frac{d}{e}+2+\frac{e}{d}\nonumber\\
\frac{2n_b+1}{2} + \frac{2n_b}{4de} +\frac{1}{e} + \frac{1}{d} & = & \frac{d}{e}+2+\frac{e}{d}\nonumber\\
n_b + \frac{1}{2} + \frac{n_b}{2de} +\frac{1}{e} + \frac{1}{d} & = & \frac{d}{e}+2+\frac{e}{d}\nonumber\\
n_b + \frac{n_b}{2de} +\frac{1}{e} + \frac{1}{d} & = & \frac{d}{e}+ \frac{3}{2}+\frac{e}{d}\label{fracident}
\end{eqnarray}
We will return to this identity later.\\

Now, we use the assumption that $d<b e$. If $(d,e;\textbf{m})$ is obstructive, then \ref{errorbound} gives the identity $d=b e + h$ with $|h|<\sqrt{2b}$. So write $d=b e-\nu$ with $\nu\in(0,\sqrt{2b})$. 

By the proof of \cite[Lemma 4.1]{Ush}, we may assume without loss of generality that $d\geq e$. Combining this with the assumption that $b>2$ gives the following.
\begin{eqnarray}
2e^2 & < & be^2\nonumber\\
& < & be^2 + 2\nu(d-\frac{e}{2})\nonumber\\
& = & 2\nu d + ed\label{nuinequal}
\end{eqnarray}
Specifically, the second inequality relies on the fact that $2\nu(d-\frac{e}{2})$ is positive. The term $2\nu d - e\nu$ is positive when $2\nu d > e\nu$, and since $\nu>0$ this holds when $2d > e$ which follows from $d\geq e$.

Divide (\ref{nuinequal}) by $2de$ to obtain
\be
\frac{e}{d} < \frac{\nu}{e}+\frac{1}{2}
\ee
which means that the right-hand side of (\ref{fracident}) can be bounded above by
\begin{eqnarray}
n_b + \frac{n_b}{2de} +\frac{1}{e} + \frac{1}{d} & = & \frac{d}{e}+ \frac{3}{2}+\frac{e}{d}\nonumber\\
& < & \frac{d}{e}+\frac{3}{2}+\frac{\nu}{e}+\frac{1}{2}\nonumber\\
& = & \frac{d+\nu}{e}+2\nonumber \\
& = & \frac{b e}{e}+2\nonumber\\
& = & b+2. \label{posintineq}
\end{eqnarray}
The inequality $n_b < b+2$ is false already when $b\geq 4$, so we need only show (\ref{posintineq}) cannot hold for $2\leq b <4$. In these cases, the fact that $b$ is small means we have an explicit bound 
\be
n_b + \frac{n_b}{2de} + \frac{1}{e}+\frac{1}{d} < 4.
\ee
The possible values of $n_b$ in this range are $k=\{4,5\}$. It is then straightforward to check that there are no solutions to this inequality when $d,e\geq 0$. For example, if $k=5$,
\bea
5 + \frac{5}{2de} + \frac{1}{e}+\frac{1}{d} & < & 4\\
\frac{5}{2de} + \frac{1}{e}+\frac{1}{d} & < -1 \\
5 + d + e & < & 2de(-1)
\eea
which is impossible for positive $d,e$.

Now we deal with the second case in (\ref{Mcases}). The resulting Chern number and self-intersection conditions are
\bea
(2n_b+1)M + 1 & = & 2(d+e)-1 \\
(2n_b+1)M^2 + 2M + 1 & = & 2de+1
\eea
Once again we substitute one equation into the other to eliminate $M$, then rearrange. Squaring the Chern number condition, notice that
\bea
((2n_b+1)M + 1)^2 & = & (2n_b+1)^2M^2 + 2(2n_b+1)M + 1\\
& = & (2n_b+1)\left((2n_b+1)M^2 + 2M + \frac{1}{2n_b+1}\right)= (2(d+e)-1)^2 \\
(2n_b+1)M^2 + 2M & = & 2de 
\eea
Making the substitution of terms containing $M$ we obtain 
\bea
(2n_b+1)\left(2de+\frac{1}{2n_b+1}\right) & = & 4(d+e)^2-4(d+e)+1\\
(2n_b+1)(2de) +4d + 4e & = & 4d^2+8de+4e^2
\eea
Then, dividing both sides by $4de$, the result is
\bea
(2n_b+1)\frac{(2de)}{4de} +\frac{1}{e} + \frac{1}{d} & = & \frac{d}{e}+2+\frac{e}{d}\\
\frac{2n_b+1}{2} +\frac{1}{e} + \frac{1}{d}  & = & \frac{d}{e}+2+\frac{e}{d}\\
n_b +\frac{1}{e} + \frac{1}{d} & = & \frac{d}{e}+\frac{3}{2}+\frac{e}{d}
\eea

Now, when the entries of $\textbf{m}$ in the same block are not identical, there is a strong lower bound on error arising from (\ref{ebound}). In this case we see that
\be
\frac{2n_b}{2n_b+1} < \sum_i \epsilon_i^2 
\ee
We also have by \ref{errorbound} that 
\be
\sum_i \epsilon_i^2 < 1-\frac{h^2}{2b}
\ee
so that overall
\bea
\frac{2n_b}{2n_b+1} & < & 1-\frac{h^2}{2b}\\
|h| < \sqrt{\frac{2b}{2n_b +1}}.
\eea
This quantity is bounded above by 1 (equivalently $2b<2n_b+1$, which is easy to show). As a result, we obtain a much stronger bound than the usual $\sqrt{2b}$.

Applying this fact in our case, we see that if $(d,e;\textbf{m})$ is obstructive, then \cite[Prop. 3.1(iii)]{CFS} gives the identity $d=be + h$ but now with $|h|<1$. So write $d=be-\nu$ with $\nu\in(0,1)$.
Without loss of generality $d\geq e$, and throughout we have used $b\geq 2$. Combining all these gives the following.
\bea
2e^2 & < & be^2\\
& < & be^2 + 2\nu(d-\frac{e}{2})\\
& = & 2\nu d + ed
\eea
The second inequality relies on the fact that $2\nu(d-\frac{e}{2})$ is positive. The term $2\nu d - e\nu$ is positive when $2\nu d > e\nu$, and since $\nu>0$ this holds when $2d > e$. As we assumed $d\geq e$, this is true.

Divide this inequality by $2de$ to obtain
\be
\frac{e}{d} < \frac{\nu}{e}+\frac{1}{2}
\ee
which means that the right-hand side of the first identity can be bounded above by
\bea
n_b + \frac{n_b}{2de} +\frac{1}{e} + \frac{1}{d} & = & \frac{d}{e}+ \frac{3}{2}+\frac{e}{d}\\
& < & \frac{d}{e}+\frac{3}{2}+\frac{\nu}{e}+\frac{1}{2}\\
& = & \frac{d+\nu}{e}+2 \\
& < & \frac{d+1}{e}+2\\
n_b + \frac{n_b}{2de}+\frac{1}{e}+\frac{1}{d}& < & \frac{d+1}{e}+2.
\eea
Again using $d<be + 1$, we get
\bea
n_b + \frac{n_b}{2de}+\frac{1}{e}+\frac{1}{d}& < & \frac{b e +1+1}{e}+2\\
n_b + \frac{n_b}{2de}+\frac{1}{e}+\frac{1}{d} & < & b + \frac{1}{e}+ \frac{1}{e}+2\\
n_b\left(1 + \frac{1}{2de}\right)+\frac{1}{d} & < & b + \frac{1}{e}+2\\
\eea
If this inequality held, recall that $d,e\geq 2$ for if $e=1$ then the class must be $E_n$. Then certainly
\bea
\frac{3}{2}n_b & < & b+2.5
\eea
but this is already false once $b>2$.

For the last case, we again work through the identities. The resulting Chern number and self-intersection conditions are
\bea
(2n_b+1)M - 1 & = & 2(d+e)-1 \\
(2n_b+1)M^2 - 2M + 1 & = & 2de+1
\eea
Once again we substitute one equation into the other to eliminate $M$, then rearrange. Squaring the Chern number condition, notice that
\bea
((2n_b+1)M -1)^2 & = & (2n_b+1)^2M^2 -2(2n_b+1)M + 1\\
& = & (2n_b+1)\left((2n_b+1)M^2 -2M + \frac{1}{2n_b+1}\right)\\
& = & (2(d+e)-1)^2 \\
(2n_b+1)M^2 - 2M & = & 2de 
\eea
Substituting $(2n_b+1)M^2-2M$ to eliminate the variable $M$,
\bea
(2n_b+1)\left(2de+\frac{1}{2n_b+1}\right) & = & 4(d+e)^2-4(d+e)+1\\
(2n_b+1)(2de) +4d + 4e & = & 4d^2+8de+4e^2
\eea
upon which we see the exact same relation as in the $M+1$ case, so the argument is identical.
\end{proof}

With this in hand, we know that the class $E_{n_b}$ determines the graph of $c_b(a)$ at $a=2n_b+1$. We now need to show that this remains the case up until where its obstruction function meets the volume constraint.

\section{Discontinuities of $RF$}
In this section we verify that the points of discontinuity of $n_b$ correspond to roots of a particular quadratic. Many derivative estimates appearing later on will depend on derivatives with respect to $b$, which will only be defined away from these discontinuities. For what follows we will only need $n\geq 4$ but the argument is simpler taking $n\in \N$.

\begin{prop}\label{endpts}
The endpoints of the steps of $n_b$ coincide with the values at $n\in\mathbb{N}$ of the functions
\bea
u(n) & = & n+2-\sqrt{2n+3} \\
l(n) & = & n+1-\sqrt{2n+1}
\eea
Notice that $u(n) = l(n+1)$. Hence, the length of the $n$th step is given by the difference $u(n)-l(n)$, which is
\be
1+(\sqrt{2n+1}-\sqrt{2n+3})
\ee
\end{prop}

\begin{proof}
Define $n_b:\R^+\to\N$ given by $n_b = \floor{b+\sqrt{2b}}$, and $u:\N\to\R^+$ defined above. 

To see that $n_b(u(n)) = n$, first calculate
\bea
n_b(u(n)) & = & n_b(n+1-\sqrt{2n+1})\\
& = & \floor{(n+1-\sqrt{2n+1})+\sqrt{2(n+1-\sqrt{2n+1})}} \\
& = & n+1 +\floor{\sqrt{2(n+1-\sqrt{2n+1})}-\sqrt{2n+1}}
\eea
Setting the term containing the floor function equal to -1, we find that it holds for any value of $n\in \N$. Since this term simplifies to -1, we have $n_b(u(n))=n$ and $n_b$ is surjective from $\{u(n)\}\to\N$.

By the quadratic formula, $b+\sqrt{2b}$ is injective on $\R^+$. So, its restriction $n_b|_{\{u(n)\}}$ is injective as a map to $\N$, and remains so after composing with the floor function which acts as the identity. Thus, the function $b+\sqrt{2b}$ identifies $\{u(n)\}$ with $\N$. 

To see that the $\{u(n)\}$ coincide with the discontinuities of $n_b$, note that the discontinuities of the floor function are precisely the integers. So, having shown that only the $\{u(n)\}$ map to integers under $n_b$, it follows that the $\{u(n)\}$ are the full set of discontinuities.

\end{proof}

\begin{rek}
It is interesting to contrast our result for $b>2$ with the analogous statement for $b=2$, proven in \cite[Lem. 3.10]{CFS}. In their work, the obstructive class determining the $RF$-value of $8\frac{1}{36}$ for $b=2$ is
$F_2 = (6,3;3,2^{\times 7})$.
Now, $b=2$ is a point of discontinuity of $n_b$, and $\lim_{b\to2^+}n_b=4$ so the corresponding class is $E_4 = (4,1;1^{\times 9})$.
When $b=2$ and $a\in[8,9]$ we have $\mu(E_4)(a)\leq\sqrt{\frac{a}{2b}}$ with equality precisely at $a=9$. Since $E_4$ is perfect at $a=9$, for all $b$ $\mu(E_4)(9)$ is the height of the step it could determine in $c_b(a)$. For larger values of $b$ (up until $6-\sqrt{11}$ by Proposition \ref{endpts}) the $RF$-value is no smaller than the ``center" of this step at $a=9$.
Thus $RF(b)>9>8\frac{1}{36}$ for such $b$, explaining the strict inequality $b>2$ in the statement of Theorem \ref{theoremA}.
\end{rek}


\section{The reduction method: $1^{\times 2n_b+2}\subset \w(a)$ and $b>2$}\label{reductiond=1}
Section \ref{2nb+1} showed that $E_{n_b}$ determines $c_b(2n_b+1)$. The obstruction function $\mu_{E_{n_b}}(a)$ is constant with value $\mu_{E_{n_b}}(2n_b+1)$ for $2n_b+1<a<2b\cdot\mu_b(E_{n_b})^2$, and intersects $\sqrt{\frac{a}{2b}}$ at $2b\cdot\mu_b(E_{n_b})^2$. Here we establish that past $2b\cdot\mu_b(E_{n_b})^2$, the function $c_b(a)$ equals the volume constraint. The following bound
from \cite{CFS} limits the range of values we must consider.

\begin{prop}\label{sqrtbound}
For every real $b\geq 2$, we have $c_b(a) = \sqrt{\frac{a}{2b}}$ when $a\geq (\sqrt{2b}+1)^2$.
\end{prop}

So, for our result need only show that $c_b(a)=\sqrt{\frac{a}{2b}}$ up to $a=(\sqrt{2b}+1)^2$. 
Thus, in this section and the next, we consider the interval $[2b\cdot\mu_b(E_{n_b})^2, (\sqrt{2b}+1)^2]$ and aim to show that for any $a\in[2b\cdot\mu_b(E_{n_b})^2, (\sqrt{2b}+1)^2]$, the corresponding weight vector reduces to one with positive defect. In the remainder of the paper, all derivatives are understood to be taken away the set of discontinuities $\{n_b\}$ described in Proposition \ref{endpts}.

We begin with the same initial vector
\be
C_0 = \left((b+1)\lambda;b\lambda, \lambda, \w(a)\right) = \left((b+1)\lambda;b\lambda, \lambda, 1^{\times 2n_b+1}, \w(a-2n_b-1)\right) \nonumber
\ee
which as always has defect $-1$.  We apply a Cremona transformation to obtain the (unordered) vector
\be
C_1 = \left((b+1)\lambda-1;b\lambda-1, \lambda-1, 1^{\times 2n_b}, \w(a-2n_b-1)\right).\nonumber
\ee
Now, the ordering becomes important. The term $b\lambda-1$ is larger than 1 since $b>2$, and is larger than $\lambda-1$ for the same reason. Notice that at its maximum $\lambda=1+\frac{1}{\sqrt{2b}}$, so $2>\lambda$ and hence $\lambda-1$ is less than 1. So the ordering becomes
\be
C_1 = \left((b+1)\lambda-1;b\lambda-1, 1^{\times 2n_b}||\lambda-1, \w(a-2n_b-1)\right)\nonumber
\ee
which has defect
\be
\delta_1 = (b+1)\lambda-1 - b\lambda +1 -2 = \lambda-2.\nonumber
\ee
Since $\lambda-1<1$, this is negative so we apply another Cremona and re-order.
\bea
C_2 & = & \left((b+1)\lambda-1+\lambda-2;b\lambda-1+\lambda-2,1+\lambda-2, 1+\lambda-2,\right.\newline\nonumber\\
& & 1^{\times 2n_b-2}||\left. \lambda-1,  \w(a-2n_b-1)\right)\newline\nonumber\\
& = & \left((b+2)\lambda-3;(b+1)\lambda-3, (\lambda-1)^{\times 2}, 1^{\times 2n_b-2}||\lambda-1,  \w(a-2n_b-1)\right)\newline\nonumber\\
& = & \left((b+2)\lambda-3;(b+1)\lambda-3, 1^{\times 2n_b-2}||\left(\lambda-1\right)^{\times 3},  \w(a-2n_b-1)\right)\newline\nonumber
\eea
The defect here is
\be
\delta_2 = (b+2)\lambda-3-(b+1)\lambda+3-2 = \lambda-2. \nonumber
\ee
At this point, we note that the defect is the same but there are 2 fewer $1$s in the vector. We apply $n_b-1$ more Cremonas, for a total of $n_b$, as this gets rid of $2n_b$ total copies of 1.\newline\indent
Thus we have the unordered vector
\bea
C_{n_b}:\left((b+n_b+1)\lambda-(2n_b+1);(b+n_b)\lambda-(2n_b+1)\right. \nonumber \newline\nonumber\\
\left.\left(\lambda-1\right)^{\times 2n_b+1},  \w(a-2n_b-1)\right). \nonumber\label{initialvector}
\eea
Computing the defect here is again dependent on ordering. 
We will need to work with cases distinguished by their orderings of the $\lambda-1$ term, the $(b+n_b)\lambda - (2n_b+1)$ and the first term of $\w(a-2n_b-1)$. To lighten notation, we abbreviate some of these terms as follows.
\begin{equation}
\begin{cases}
\w(a-2n_b-1) = ((d)^{\times k},(d')^{\times k'},(d'')^{\times k''},\mathcal{W})\\
 \sqrt{\frac{a}{2b}} := \lambda \\
(b+n_b)\lambda - (2n_b+1) := m
\end{cases}
\end{equation}
Here $k^{(n)}$ denotes the multiplicity of the weight expansion term $d^{(n)}$, and $\mathcal{W}$ consists of the remaining terms in the weight expansion.

Here in Section \ref{reductiond=1} we give the arguments when $\w(a-2n_b-1)$ begins with a 1, i.e. $d=1$. This explains the notation used in the title of this section, as there are $2n_b+2$ copies of $1$ in the weight vector. Next, in Section \ref{reductiond<1} we argue when $d<1$, so there are $2n_b+1$ copies of 1 in the weight vector and the first term of this weight expansion is strictly less than 1. We consider each ordering of the subsequent terms as a separate case. Some of these cases are redundant, others are impossible, while still others require further sub-cases to cover all possibilities.

The table below lists cases covered in this section with their section numbers, and Figure~\ref{fig:mdlambdafigure} gives a schematic depiction of the change in each term as $a$ and $b$ vary where $m,\lambda, d'$ are all continuous. 
\begin{center}
\begin{tabular}{|c|c|c|c|c|c}
\hline
Ordering   & Sub-Case & Section  \\
\hline
$1>d'>\lambda-1>m$ & - & \ref{1dlambdam} \\
$1>d'>m>\lambda-1$ & $k'\geq 4$ & \ref{ldlargestk4}\\
$1>d'>m>\lambda-1$ & $k'=3$ & \ref{ldlargestk3}\\
$1>d'>m>\lambda-1$ & $k'=2$ & \ref{ldlargestk2}\\
$1>d'>m>\lambda-1$ & $k'=1$ and $d''\leq m$ & \ref{ldlargestk1md}\\
$1>d'>m>\lambda-1$ & $k'=1$ and $m\leq d''$ & \ref{1dlargestk1dm}\\
$1>m>\lambda-1>d'$ & - & \ref{1mlambdad}\\
$1>m>d'>\lambda-1$ & - & \ref{1mlargestdlambda}\\
$1>\lambda-1>m>d'$ & $2m+1-2(\lambda-1)>(\lambda-1)$ & \ref{1lambdalargestA}\\
$1>\lambda-1>m>d'$ & $2m+1-2(\lambda-1)<(\lambda-1)$ & \ref{1lambdalargestB}\\
$1>\lambda-1>d'>m$ & - & \ref{1lambdadm}\\
\hline
\end{tabular}
\end{center}

\begin{figure}
  \centering
\includegraphics[width=\textwidth]{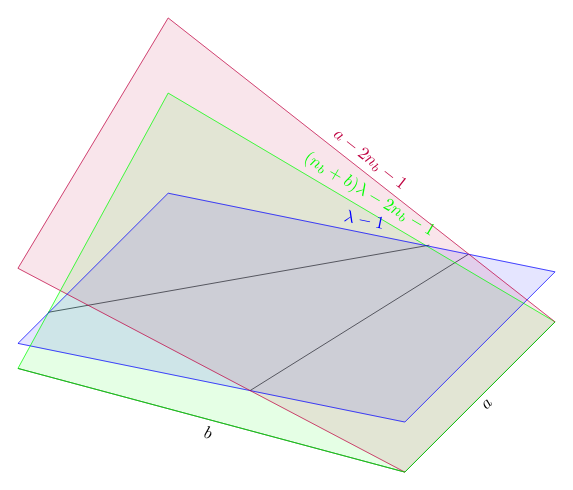}
\caption{Relative ordering of terms in the tail of the weight vector in Equation \ref{initialvector}, restricted to $a\in[RF, (\sqrt{2b}+1)^2]$ and an interval where each such term is smooth in $b$. Black line segments are intersections of the graphs of these terms as functions of $a,b$.}\label{fig:mdlambdafigure}
\end{figure}

There are several estimates for these quantities which we list here for use in the sequel.

\begin{lem}\label{basicineq}
For $n_b=\floor{\sqrt{2b}+b}$, we have
\begin{enumerate}
    \item $b<n_b$ \label{blessnb}
    \item $1<\frac{2n_b+1}{b+n_b}$ and $\lim_{b\to\infty}\frac{2n_b+1}{b+n_b} = 1$ \label{2nb1overbnb}
\end{enumerate}
\end{lem}

\begin{proof}
For (1), $n_b=\floor{b+\sqrt{2b}}$ is bounded below by $b+\sqrt{2b}-1$, which is larger than $b$ once $b>\frac{1}{2}$.

For (2), this is equivalent to $b+n_b<2n_b+1$, or $b<n_b+1$, which follows from (\ref{blessnb}). The remaining claim follows from calculating $\frac{\partial}{\partial b}\left(\frac{2n_b+1}{b+n_b}\right)$:

\bea
\frac{\partial}{\partial b}\left(\frac{2n_b+1}{b+n_b}\right) & = & \frac{-(2n_b+1)(1)}{b+n_b}<0. 
\eea

as $\frac{\partial n_b}{\partial b} = 0$.


\end{proof}

\begin{lem}\label{RFVderivatives}
For $RF(b)$ and $V(b)$ as defined above:
\begin{enumerate}
    \item $\frac{\partial RF}{\partial b} = RF(b)\left(\frac{n_b-b}{b(n_b+b)}\right)$ \label{RFderiv}
    \item $\frac{\partial V}{\partial b} = V(b)\left(\frac{\sqrt{2}}{\sqrt{b}(\sqrt{2b}+1)}\right)$ \label{Vderiv}
    \item $\frac{\partial^2 RF}{\partial b^2},\frac{\partial^2 V}{\partial b^2}< 0$.
    \item $a'(t,b)\leq \frac{a(t,b)}{b}$\label{aderiv}
\end{enumerate}
\end{lem}

\begin{proof}
Direct calculation.

\end{proof}

\begin{lem}\label{dprimeestimates}
For $d'=a-2n_b-1-1$, we have
\begin{enumerate}
    \item When $b\in\{u(n)\}$, $d'=-1$.\label{dprimestart}
    \item $\frac{\partial d'}{\partial b} > 0$ \label{dprimedb}
    \item $\frac{\partial^2 d'}{\partial b} < 0$ \label{dprimedb2}
    \item $\frac{\partial d'}{\partial t}>0$
\end{enumerate}
\end{lem}

\begin{proof}
Direct calculation.
\end{proof}

\begin{lem}\label{mestimates}
Recall that $m = (b+n_b)\lambda - (2n_b+1)$.
\begin{enumerate}
    \item When $a=RF$, $m=0$, and $m=0$ at all $\{u(n)\}$. In general for all $b>1$, $m\geq 0$. and $a\in[RF, (\sqrt{2b}+1)^2]$, $0\leq m<1$.\label{mstart}
    \item $\frac{\partial m}{\partial b} > 0 $ \label{mdb}
    \item $\frac{\partial^2 m}{\partial b^2} >0 $\label{mdb2}
    \item $\frac{\partial m}{\partial t} >0 $ \label{mdt}
\end{enumerate}
\end{lem}

\begin{proof}
Direct calculation.
\end{proof}

\begin{lem}\label{lambdaineq}
For $\lambda=\sqrt{\frac{a(t,b)}{2b}}$ we have
\begin{enumerate}

    \item $\frac{\partial \lambda}{\partial t}>0$ \label{dlambdadt}
    \item $\frac{\partial \lambda}{\partial b}<0$\label{dlambdadb}
    \item $\lambda > 2$ \label{lambdalowerbound}
\end{enumerate}
\end{lem}

\begin{proof}

Direct calculation.
\end{proof}

\subsection{The ordering $1=d>d'>\lambda-1>m$}
\label{1dlambdam}
\be
C_{n_b}:\left(m+\lambda;1,d',\left(\lambda-1\right)^{\times 2n_b+1},m, \mathcal{W} \right).
\ee
The defect is
\bea
\delta_{n_b} & = & m+\lambda-1-d'-\lambda+1\\
& = & m-d'
\eea
which is negative by assumption, so we apply a Cremona transformation.
\bea
C_{n_b+1} & : & \left( m +\lambda+m-d';1+m-d',d'+m-d',\lambda-1+m-d',\left(\lambda-1\right)^{\times 2n_b},\right.\\
& & \left. m,  \mathcal{W}\right).\\
& = & (2m+\lambda-d';1+m-d',m^{\times 2},\lambda-1+m-d',(\lambda-1)^{\times 2n_b}, \mathcal{W})
\eea
After re-ordering, we get
\bea
C_{n_b+1} & : & \left(2m +\lambda-d';1+m-d',m^{\times 2},\left(\lambda-1\right)^{\times 2n_b}, \lambda-1+m-d',\right.\\
& & \left. m, \mathcal{W}\right).
\eea
The defect is
\bea
\delta_{n_b+1} & = & 2m +\lambda-d'-1-m+d'-2m\\
& = & \lambda-1-m
\eea
which is positive by assumption.

\subsection{The ordering $d=1>d'>m>\lambda-1$}
\label{1dmlambda}

\be
C_{n_b} : (m+\lambda;1,d'^{\times k},m,(\lambda-1)^{\times 2n_b+1}||\mathcal{W})\label{1dlargest}
\ee

The defect here depends on $k'$, the multiplicity of $d'$; in particular, whether $k'$ is 2, 3, or 4. In Lemma \ref{dprimemcomparison} below, we show that $k'\geq 5$ is impossible given the ordering above. So, the additional terms in the weight expansion necessitate the following cases:
\begin{enumerate}
    \item $k'= 4$, so $d'\leq \frac{1}{4}$.
    \item $k'=3$, so $\frac{1}{4} < d'\leq \frac{1}{3}$.
    \item $k'=2$ so $\frac{1}{3} < d'\leq \frac{1}{2}$.
    \item $d' > \frac{1}{2}$ and $d''\geq m$, so there is 1 copy of $d'$, 1 copy of $d''$.
    \item $d' > \frac{1}{2}$ and $d''\leq m$, so there is 1 copy of $d'$, 1 copy of $d''$ with $m$ between.
\end{enumerate}




\begin{lem}\label{dprimemcomparison}
If $d'>m$, then $d'>\frac{1}{5}$. 
\end{lem}

\begin{proof}
We look for the values of $b$ and $t$ where $m=d'$. The minimal $d'$ where this occurs will be our lower bound.

By Lemma \ref{mestimates} (\ref{mdb}) and Lemma \ref{dprimeestimates} (\ref{dprimedb}), the first derivatives of $m$ and $d'$ with respect to $b$ are strictly positive. Note also that when $b\in\{u(n)\}$, $d'=-1$ by Lemma \ref{dprimeestimates} (\ref{dprimestart}) and $m=0$ by Lemma \ref{mestimates} (\ref{mstart}). At the same time, the second derivative of $d'$ with respect to $b$ is negative by Lemma \ref{dprimeestimates} (\ref{dprimedb2}) while the second derivative of $m$ with respect to $d$ is positive by Lemma \ref{mestimates} (\ref{mdb2}).

Thus, for all fixed $t$, consider the change in the value $m=d'$. While $d'=-1$ and $m=0$ at the left endpoint of each step, the intersection $m=d'$ occurs closer to the left endpoint as $b$ decreases, since $d'$ grows faster and $m$ grows slower (so the intersection point moves to a smaller value of $b$). Moreover, the value of $d'$ at this smaller value of $b$ is smaller, since $\frac{\partial m}{\partial b}>0$. Thus, to obtain a lower bound on $m=d'$, it suffices to consider the first interval $b\in[2,6-\sqrt{11}]$, i.e. where $b$ is smallest.

Now we examine the equation determining the intersection point between $m$ and $d'$. We differentiate with respect to $t$, to find the $t$-derivatives of both $\frac{\partial m}{\partial b}$ and $\frac{\partial d'}{\partial b}$ and show they are both positive. Thus, as $t$ increases, the value at their intersection point (if it exists) only increases. This means we get a lower bound at the minimal value for the intersection point, which will occur at smallest possible $t$ for which $m=d'$.

In what follows, we let $y$ be the value of $m$ or $d'$. Note that $b$ and $t$ are independent, so $n_b$ is constant in $t$.

\bea
y = m(b,t) = (b+n_b)\sqrt{\frac{a(t,b)}{2b}}-2n_b-1 & & y = d'(b,t) = a(t,b)-2n_b-1-1\\
y + 2n_b+1 = (b+n_b)\sqrt{\frac{a(t,b)}{2b}} & & y+2n_b+2 = a(t,b)\\
\frac{y+2n_b+1}{b+n_b} = \sqrt{\frac{a(t,b)}{2b}} &  & y+2n_b+2=a(t,b)\\
2b\left(\frac{y+2n_b+1}{b+n_b}\right)^2 = a(t,b) &  & y+2n_b+2=a(t,b)\\
2b\left(\frac{y+2n_b+1}{b+n_b}\right)^2 & = & y+2n_b+2
\eea

Now we differentiate this implicitly with respect to $t$.

\bea
\frac{2b}{(b+n_b)^2}\cdot 2\left(y+2n_b+1\right)\cdot\left(\frac{\partial y}{\partial t}\right)^2 = \frac{\partial y}{\partial t}\\
\frac{\partial y}{\partial t}\left(\left(\frac{\partial y}{\partial t}\right)\frac{2b}{(b+n_b)^2}\cdot 2(y+2n_b+1)-1\right) & = & 0
\eea

So either $\frac{\partial y}{\partial t}=0$ uniformly (obviously false) or we have

\bea
0 & = & \left(\frac{\partial y}{\partial t}\right)\frac{2b}{(b+n_b)^2}\cdot 2(y+2n_b+1)-1\\
\frac{\partial y}{\partial t} & = & \frac{(b+n_b)^2}{4b(y+2n_b+1)}
\eea

This is strictly positive so long as $y\geq0$, as needed.

Now, the intersection $m=d'$ does not necessarily happen at $t=0$. However, the above implicit differentiation shows that the minimizing $t$ along the locus $m=d'$ yields the minimal value of $m$ and $d'$ when they coincide.

So, we derive a formula for this intersection point. By the previous arguments we need only calculate on the first step $b\in[2,6-\sqrt{11}]$, on which $n_b=4$. After this simplification, what remains are continuous functions in $b$. 

\bea
& & \lim_{b\to (6-\sqrt{11})^-} m\\
& = & \lim_{b\to (6-\sqrt{11})^-} (b+n_b)\sqrt{\frac{\left((1-t)2b\left(\frac{2n_b+1}{b+n_b}\right)^2+t\left((\sqrt{2b}+1)^2\right)\right)}{2b}}-2n_b-1\\
& = & \lim_{b\to (6-\sqrt{11})^-} (b+4)\sqrt{\frac{\left((1-t)2b\left(\frac{9}{b+4}\right)^2+t\left((\sqrt{2b}+1)^2\right)\right)}{2b}}-9\\
& = & (6-\sqrt{11}+4)\sqrt{\frac{\left((1-t)2(6-\sqrt{11}\left(\frac{9}{6-\sqrt{11}+4}\right)^2+t\left((\sqrt{2(6-\sqrt{11})}+1)^2\right)\right)}{2(6-\sqrt{11})}}-9\\
\eea

Similarly, the formula for $d'$ is

\bea
& & \lim_{b\to (6-\sqrt{11})^-} d'\\
& = & a(t,b)-2n_b-1-1\\
& = & \lim_{b\to (6-\sqrt{11})^-} \left((1-t)2b\left(\frac{2n_b+1}{b+n_b}\right)^2+t\left((\sqrt{2b}+1)^2\right)\right)-2n_b-1-1\\
& = & \lim_{b\to (6-\sqrt{11})^-} \left((1-t)2b\left(\frac{9}{b+4}\right)^2+t\left((\sqrt{2b}+1)^2\right)\right)-10\\
& = & \left((1-t)2(6-\sqrt{11})\left(\frac{9}{6-\sqrt{11}+4}\right)^2+t\left((\sqrt{2(6-\sqrt{11})}+1)^2\right)\right)-10\\
\eea
It is straightforward to find the intersection numerically: this occurs at $m=d'=0.2252$, so certainly if $d'=m$ then $d'>\frac{1}{5}$.

\end{proof}

\subsubsection{Case 1: $k'= 4$, so $\frac{1}{5}<d'\leq \frac{1}{4}$ }\label{ldlargestk4}
In this case the defect is
\be
\delta_{n_b} = m+\lambda-1-2d'
\ee
which is negative under these assumptions. Applying a Cremona transformation we obtain the following unordered vector.

\bea
C_{n_b+1} &:& (m+\lambda+m+\lambda-1-2d';1+m+\lambda-1-2d',(d'+m+\lambda-1-2d')^{\times 2},\\
& & d'^{\times 4-2},m, (\lambda-1)^{\times 2n_b+1}, \mathcal{W})\\
& = & (2m+1+2(\lambda-1)-2d';||m+\lambda-2d', (m+\lambda-1-d')^{\times 2}, d^{\times 4-2},\\
& & m,(\lambda-1)^{\times 2n_b+1},\mathcal{W})
\eea

\begin{lem}\label{dmlesslambda}
Always, $d'-m<\lambda-1$.
\end{lem}
\begin{proof}
We show first that $\frac{\partial}{\partial t}(m+\lambda-1-d')\leq 0$. In this case $d=1$ and $d'=a-2n_b-1-1$.
\bea
& &\frac{\partial}{\partial t}(m+\lambda-1-d')\\
& = &\frac{\partial}{\partial a}((b+n_b)\lambda-2n_b-1+\lambda-1-d')\\
& = & (b+n_b+1)\frac{\partial \lambda}{\partial t} - \frac{\partial d'}{\partial t}\\
& = & (b+n_b+1)\frac{1}{\sqrt{2b}}\left(\frac{1}{2\sqrt{(1-t)RF(b)+tV(b)}}\right)\left(-RF(b)+V(b)\right) - (V(b)-RF(b))\\
\eea
so we hope to show that this is negative. This is equivalent to showing
\bea
0 & > & (b+n_b+1)\frac{1}{\sqrt{2b}2\sqrt{a(t,b)}}-1\\
b+n_b+1 & \leq & 2\sqrt{2ba(t,b)}
\eea
The right-hand side is minimized at $a=RF(b)$. For this value of $a$, the RHS becomes $8b\cdot\frac{2n_b+1}{b+n_b}$, which is bounded below by Lemma \ref{basicineq} (\ref{2nb1overbnb}), and the stronger inequality $b+n_b+1\leq 8b$ is true for $b>1$. Thus $\frac{\partial}{\partial t}(m+\lambda-1-d')\leq 0$.




So, to determine whether $m+\lambda-1-d'\geq 0$, it suffices to verify it at $a_{max}=(\sqrt{2b}+1)^2$, since it is larger for other values of $t$.

\bea
m+\lambda-1-d' & = & (b+n_b)\lambda-(2n_b+1)+\lambda-1-d'\\
& = & (b+n_b)\frac{\sqrt{2b}+1}{\sqrt{2b}}-2n_b-1+\frac{\sqrt{2b}+1}{\sqrt{2b}}-1-((\sqrt{2b}+1)^2-2n_b-2)\\
& = & (b+n_b)\frac{\sqrt{2b}+1}{\sqrt{2b}} + \frac{\sqrt{2b}+1}{\sqrt{2b}}-1-(\sqrt{2b}+1)^2+1\\
& = & (b+n_b+1)\frac{\sqrt{2b}+1}{\sqrt{2b}}-(\sqrt{2b}+1)^2\\
& = & (b+b+\sqrt{2b})\frac{\sqrt{2b}+1}{\sqrt{2b}}-(\sqrt{2b}+1)^2\\
& = & 0
\eea

Now we check that $\frac{\partial}{\partial b}(m+\lambda-1-d')\leq 0$, so that the above is a lower bound as needed.

\bea
& & \frac{\partial}{\partial b}(m+\lambda-1-d')|_{a=(\sqrt{2b}+1)^2}\\
& = & \frac{\partial}{\partial b}\left((b+n_b)\frac{\sqrt{2b}+1}{\sqrt{2b}}-2n_b-1+\frac{\sqrt{2b}+1}{\sqrt{2b}}-1-((\sqrt{2b}+1)^2-2n_b-2)\right)\\
& = & \frac{\partial}{\partial b}\left((b+n_b+1)\frac{\sqrt{2b}+1}{\sqrt{2b}}-\sqrt{2b}+1)^2\right)\\
& = & \frac{\sqrt{2b}+1}{\sqrt{2b}} + (b+n_b+1)\left(-\frac{1}{2}(2b)^{-\frac{3}{2}}\cdot 2\right)\\
& = & \frac{1}{\sqrt{2b}}\left(\sqrt{2b}+1-(b+n_b+1)(4b^2)\right)
\eea
which is clearly negative for $b>1$. 

\end{proof}



To determine the ordering, notice that \be
m+\lambda-1-d' < m+\lambda-2d'
\ee
as $d'<1$. Also,
\be
m+\lambda-2d' > d'
\ee
as $m+\lambda-3d' > m+\lambda-1>0$ since the first two terms are non-negative and $3d'<1$ here. Next,
\be
m+\lambda-1-d'< m 
\ee
since $\lambda-1<d'$ by assumption. Also
\be
m<m+\lambda-2d'
\ee
as $\lambda-1<\lambda-2d'$ here. Lastly, observe that
\be
m+\lambda-1-d' < d'
\ee
since this is equivalent to $m-d'<d'-(\lambda-1)$ and the left-hand side is negative while the right-hand side is positive.

Having checked that the ordering is correct and the terms are non-negative, we are ready to apply a Cremona transformation.

\be
C_{n_b+1}: (2m+1+2(\lambda-1)-2d';d'^{\times 2}, m+\lambda-2d', m,(\lambda-1)^{\times 2n_b+1},(m+\lambda-1-d')^{\times 2},\mathcal{W})
\ee
with defect 
\bea \delta_{n_b+1} & = & 2m+1+2(\lambda-1)-2d'-2d'-m-\lambda+2d'\newline\nonumber\\
& = & m+(\lambda-1)-2d'\nonumber
\eea
which is certainly negative by assumption. So we apply a Cremona transformation.

\bea
C_{n_b+2} &:& (2m+1+2(\lambda-1)-2d'+m+(\lambda-1)-2d';\\
& & m+\lambda-2d'+m-(\lambda-1)-2d',(d'+m+(\lambda-1)-2d')^{\times 2},\\
& & m, (\lambda-1)^{\times 2n_b+1},
(m+\lambda-1-d')^{\times 2}, \mathcal{W})\\
& = & (3m+1+3(\lambda-1)-2d';||2m+1-4d', (m+(\lambda-1)-d')^{\times 2},\\
& & m,(\lambda-1)^{\times 2n_b+1},
(m+\lambda-1-d')^{\times 2},\mathcal{W})
\eea

At this point we again verify that the terms are positive. The head term is

\be
3m+1+3(\lambda-1)-2d'
\ee

and since $k'=4$, we have $1-2d'>\frac{1}{2}$ so this is immediate. The first tail term is non-negative since $d'\leq \frac{1}{4}$. We already showed that $m+\lambda-1-d'>0$.

Now for the ordering. We need only check whether $2m+1-4d' > m$ but we assume here that $d'\leq\frac{1}{4}$, so we get a defect of

\bea
\delta_{n_b+2} & = & 3m+1+3(\lambda-1)-2d'-2m-1+4d'-m-\lambda+1\\
& = & 2(\lambda-1)+2d'
\eea

which is positive.


\subsubsection{Case 2: $\frac{1}{3}\leq d'< \frac{1}{2}$, so $k'= 3$.}\label{ldlargestk3}
We must also deal with the possibility that there are exactly 3 copies of $d'$, in which case we have

\bea
C_{n_b+2} &:& \left((b+n_b+2)\lambda-(2n_b+2+2d'); (b+n_b)\lambda-(2n_b+1),d',\right.\newline\nonumber\\
& &\lambda-2d', (\lambda-1)^{\times 2n_b+1},(\lambda-1-d')^{\times 2}|| \w(1-2d',d'))\nonumber
\eea

with defect 
\bea \delta_{n_b+2} & = & (b+n_b+2)\lambda-(2n_b+2+2d')-(b+n_b)\lambda+(2n_b+1)-d'-\lambda+2d'\newline\nonumber\\
& = & \lambda - 1-d' \nonumber
\eea
which is negative since $d'>\lambda-1$ by assumption. So we apply a further Cremona to obtain

\bea
C_{n_b+3} &:& \left((b+n_b+3)\lambda-(2n_b+3+3d'); (b+n_b+1)\lambda-(2n_b+2+d'),\lambda-1,\right.\newline\nonumber\\
& &\lambda-2d', (\lambda-1)^{\times 2n_b+1},(\lambda-1-d')^{\times 2}|| \w(1-2d',d'))\nonumber
\eea
which reorders to
\bea
C_{n_b+3}&:& \left((b+n_b+3)\lambda-(2n_b+3+3d'); (b+n_b+1)\lambda-(2n_b+2+d'),\right.\newline\nonumber\\
& &\lambda-2d', (\lambda-1)^{\times 2n_b+2},(\lambda-1-d')^{\times 2}|| \w(1-2d',d'))\nonumber
\eea
where possibly the first two terms are switched. Either way, this has defect 
\bea \delta_{n_b+3}& = & (b+n_b+3)\lambda-(2n_b+3+3d')\\
& & -(b+n_b+1)\lambda+(2n_b+2+d')-\lambda+2d'-\lambda+1\newline\nonumber\\
& = & 0.\nonumber
\eea
So again an embedding exists.

\subsubsection{Case 3: $\frac{1}{3}<d'\leq \frac{1}{2}$, so $k'= 2$.}\label{ldlargestk2}
\be
C_{n_b+1} : (m+\lambda;1,(d')^2,m, (\lambda-1)^{\times 2n_b+1},\mathcal{W})\nonumber
\ee
with defect
\bea \delta_{n_b+1} & = & m+\lambda-1-2d'\newline\nonumber\\
& = & m + \lambda -1 -2d'.\nonumber
\eea
This is certainly negative given the assumptions, so we continue with another Cremona:
\bea
C_{n_b+2} &:& \left(m+\lambda+m+\lambda-1-2d';1+m+\lambda-1-2d',m+\lambda -1-d',\right.\newline\nonumber\\ & &  m+\lambda -1-d',m, (\lambda-1)^{\times 2n_b+1}, \mathcal{W}).\nonumber
\eea
By Lemma \ref{dmlesslambda}, we know $d'-m<\lambda-1$, so these terms are all positive.

Now we determine the ordering. Recall that the unordered vector is
\bea
C_{n_b+2} &:& \left(2m+2\lambda-1-2d';m+\lambda-2d',(m+\lambda -1-d')^{\times 2},m, (\lambda-1)^{\times 2n_b+1},\mathcal{W})\right).\nonumber
\eea

We should compare the terms $m, m+\lambda-2d', m+\lambda-1-d'$. Since $d'\leq\frac{1}{2}$ and $d'>m$ here, $\lambda-2d' \geq \lambda-1>0$ as shown above. However, there are two possible relative orderings:
\bea
& & 0<m+\lambda-1-d' < m < m+\lambda-2d'\\
& & 0< m < m+\lambda-1-d' < m+\lambda-2d'
\eea
Also since $d'>m$ here, we must also have $m + \lambda -1 -d' <\lambda-1$. This means
\be
\lambda-1-d'+m < \lambda-1 \leq \lambda -d' -d'+m =  \lambda-2d' + m
\ee
We assumed at the outset that $\lambda-1<m<d'$. But then the only possible ordering is
\bea
& & 0 < m+\lambda-1-d' < \lambda-1 < m < m+\lambda-2d'
\eea

So, the ordered vector is
\bea
C_{n_b+2} &:& \left(2m + 2\lambda - 1 -2d'; m+\lambda -2d',m,\right.(\lambda-1)^{\times 2n_b+1}, \newline\nonumber\\
& & (m+\lambda-1-d')^{\times 2}, (\lambda-1-d')^{\times 2}|| \w(1-2d',d'))\nonumber
\eea
with defect 
\bea \delta_{n_b+2} & = & 2m + 2\lambda -1-2d'-m-\lambda+2d'-m -\lambda+1\newline\nonumber\\
& = & 0\nonumber
\eea
as needed.

\subsubsection{Case 4: $d' > \frac{1}{2}$ and $d''\geq m$}\label{ldlargestk1md}
\be
C_{n_b+1} : (m+\lambda;1,d',d'',m, (\lambda-1)^{\times 2n_b+1},\mathcal{W})\nonumber
\ee
The defect here is
\bea \delta_{n_b+1} & = & m+\lambda-1-d'-d''\newline\nonumber\\
& = & m+\lambda-1-d'-d''\nonumber
\eea
This is non-positive as $d'+d''=1$ by properties of the weight expansion, so we apply a further Cremona.
\bea
C_{n_b+2} &:& (m+\lambda+(m+\lambda-1-d'-d'');\newline\nonumber\\
& & 1+(m+\lambda-1-d'-d''),d'+(m+\lambda-1-d'-d''),\newline\nonumber\\
& & d''+(m+\lambda-1-d'-d''),m, (\lambda-1)^{\times 2n_b+1},\mathcal{W})\newline\nonumber\\
& = & (2m+1+2(\lambda-1)-d'-d'';||m+\lambda-d'-d'',\\
& & m+\lambda-1-d'',m+\lambda-1-d',m, (\lambda-1)^{\times 2n_b+1},\mathcal{W}).
\eea
Lemma \ref{dmlesslambda} shows that the newly generated terms here are positive. To determine the ordering, notice that
\be
m+\lambda-1-d' \leq \lambda-1
\ee
is equivalent to $m\leq d'$, which we assumed was true for this case. A similar argument shows that the remaining two new terms introduced at this step must be smaller than $\lambda-1$, which gives the following vector.
\bea
C_{n_b+1} &:& (2m+1+2(\lambda-1)-d'-d'' ; m+\lambda-d'-d'',\\
& & m, (\lambda-1)^{\times 2n_b+1}, m+\lambda-1-d'',m+\lambda-1-d', \mathcal{W}).
\eea
with defect
\bea 
\delta_{n_b+1} & = & 2m+1+2(\lambda-1)-d'-d''-m-\lambda+d'+d''-m-(\lambda-1)\newline\nonumber\\
& = & 0
\eea
as needed.

\subsubsection{Case 5: $d' \geq \frac{1}{2}$ and $d''\leq m$}\label{1dlargestk1dm}
\bea
C_{n_b+1} &:& (m+\lambda;1,d',m,||d''^{\times k''},(\lambda-1)^{\times 2n_b+1}, \w(1-d',d'))\nonumber
\eea
with defect
\bea \delta_{n_b+1} & = & m+\lambda-1-d'-m\newline\nonumber\\
& = & \lambda -1-d'\nonumber
\eea
This is negative by assumption on the ordering, so we must apply a Cremona.
\bea
C_{n_b+2} &:& ((m+\lambda+(\lambda-1-d'); 1+(\lambda-1-d'),d'+(\lambda-1-d'),m+(\lambda-1-d')\newline\nonumber\\ & & ||d''^{\times k''},(\lambda-1)^{\times 2n_b+1}, \mathcal{W})\newline\nonumber\\
& = & (m+1+2(\lambda-1)-d';||\lambda-d', (\lambda-1)^{\times 2n_b+2}, m+\lambda-1-d',d''^{\times k''},\mathcal{W})\nonumber
\eea
The new terms are positive by the same arguments as in the previous Section \ref{ldlargestk1md}. For the ordering, we have $\lambda-1\geq m+\lambda-1-d'$ as in the previous section, so we must consider the placement of $d''$. We need only determine whether $\lambda-1>d''$ or not. 

If so, the defect is
\bea \delta_{n_b+2}& = & m +1+2(\lambda-1)-d'-\lambda+d'-2(\lambda-1)\\ 
& = & m-(\lambda-1)
\eea
which is positive by assumption.

If not, so $d''\geq \lambda-1$, then the defect is
\bea \delta_{n_b+2}& = & m +1+2(\lambda-1)-d'-\lambda+d'-d''-(\lambda-1)\\ 
& = & m-d''
\eea
which is also positive by assumption.

\subsection{The ordering $1>m>\lambda-1>d'$}\label{1mlambdad}
\bea
C_{n_b+1} &:& (m+\lambda;1,m,(\lambda-1)^{\times 2n_b+1},d'^{\times k'},\mathcal{W})\nonumber
\eea
This has defect
\bea \delta_{n_b+1} & = & m+\lambda-1-m-(\lambda-1)\newline\nonumber\\
& = & 0\nonumber
\eea
as needed.

\subsection{The ordering $1>m>d'>\lambda-1$}
\label{1mlargestdlambda}

\bea
C_{n_b+2}& :& (m+\lambda;1,m,d'^{\times k'}, (\lambda-1)^{\times 2n_b+1}, \mathcal{W})\newline\nonumber
\eea

The defect is

\bea
\delta_{n_b+2} & = & m+\lambda-1-m-d'\newline\nonumber\\
& = & \lambda-1-d'\nonumber
\eea

which is negative since by assumption $d'>\lambda-1$. So we apply another Cremona transformation.
\bea
C_{n_b+2}& :& (m+\lambda+\lambda-1-d';1+\lambda-1-d',m+\lambda-1-d', d'+\lambda-1-d', \\
& &  d'^{\times k'-1} ||(\lambda-1)^{\times 2n_b+1}, \mathcal{W})\newline\nonumber\\
& = & (m+1+2(\lambda-1)-d';||\lambda-d',m+\lambda-1-d',d'^{\times k'-1},\\
& & (\lambda-1)^{\times 2n_b+2},d''^{\times k''}, \mathcal{W}).\nonumber
\eea
Positivity of $m+\lambda-1-d'$ follows from Lemma \ref{dmlesslambda}, and the others are clear. For the ordering, $\lambda-d'>m+\lambda-1-d'$ as $m\leq 1$ by Lemma \ref{mestimates} (\ref{mstart}), and $m+\lambda-1-d'>\lambda-1$ as $m>d'$ by assumption. Also, $\lambda-d'>\lambda-1$ and by assumption $d'>\lambda-1$.  

Another consideration is the placement of $d''$, the next term in the weight expansion. Note first that $d''<\lambda-d'$. By Lemma \ref{lambdaineq} (\ref{lambdalowerbound}), $\lambda-1>0$ always, so then $\lambda-(d'+d'')>0$ as $d'+d''\leq 1$, since $1,d',d''$ are consecutive terms in the weight expansion. 

\subsubsection{$k'=1$, $m+\lambda-1-d'>d''$}
\bea
C_{n_b+2} &:& (m+1+2(\lambda-1)-d';||\lambda-d',m+\lambda-1-d',|| d''^{\times k''}, (\lambda-1)^{\times 2n_b+2}, \mathcal{W})
\eea
The defect with the ordering $d''>\lambda-1$ above is
\bea
\delta_{n_b+2} & = & m+1+2(\lambda-1)-d'-\lambda+d'-m-\lambda+1+d'-d''\newline\nonumber\\
& = & d'-d''
\eea
which is positive.

The other possibility, $\lambda-1>d''$, has defect
\bea
\delta_{n_b+2} & = & m+1+2(\lambda-1)-d'-\lambda+d'-m-\lambda+1+d'-(\lambda-1)\newline\nonumber\\
& = & d'-(\lambda-1)
\eea
which is positive by assumption.

\subsubsection{$k'=1$, $d''>m+\lambda-1-d'$}
\bea
C_{n_b+2} &:& (m+1+2(\lambda-1)-d';\lambda-d',d''^{\times k''},m+\lambda-1-d', (\lambda-1)^{\times 2n_b+2}, \mathcal{W})
\eea
The defect is
\bea
\delta_{n_b+2} & = & m+1+2(\lambda-1)-d'-\lambda+d'-2d''\newline\nonumber\\
& = & m+\lambda-1-2d''.
\eea
This defect only occurs if $k''\geq 2$, in which case it must be positive - by properties of the weight expansion, $2d''\leq d'$, and $m-d'>0$ by assumption.

In the case where $k''=1$, the defect would be
\bea
\delta_{n_b+2} & = & m+1+2(\lambda-1)-d'-\lambda+d'-d''-m-\lambda+1+d'\\
& = & d'-d''
\eea
which is non-negative.

\subsubsection{$m+\lambda-1-d'>d'$, $k'\geq 2$}
\bea
C_{n_b+2}:(m+1+2(\lambda-1)-d';\lambda-d',m+\lambda-1-d',d'^{\times k'-1}, (\lambda-1)^{\times 2n_b+2},d''^{\times k''}, \mathcal{W})
\eea
The defect here is
\bea
\delta_{n_b+2} & = & m+1+2(\lambda-1)-d'-\lambda+d'-m-\lambda+1+d'-d'\newline\nonumber\\
& = & 0\nonumber
\eea
as needed.

\subsection{The ordering $1>\lambda-1>m>d'$}
\label{1lambdalargest}

Note that by \ref{basicineq}, under these assumptions we also have $m-(\lambda-1)<1$ since $-(\lambda-1)<0$ by Lemma \ref{lambdaineq} (\ref{lambdalowerbound}) and $m<1$ by Lemma \ref{mestimates} (\ref{mstart}). 

In this case we see that the ordering is
\be
C_{n_b}:\left(m+\lambda;1,(\lambda-1)^{\times 2n_b+1},m,d'^{\times k},\mathcal{W}\right).
\ee
The defect is
\bea
\delta_{n_b} & = & m+\lambda-1-2\lambda+2\\
& = & m -(\lambda-1)
\eea
which is negative by assumption, so we apply a Cremona transformation.
\bea
C_{n_b+1} & : & \left(m +\lambda+m-(\lambda-1);1+m-(\lambda-1),(\lambda-1 + m - (\lambda-1))^{\times 2},\right.\\
& & ,\left.\left(\lambda-1\right)^{\times 2n_b-1}, m,d'^{\times k}, \mathcal{W}\right).\\
& = & \left(2m+1;m+1-(\lambda-1),m^{\times 3},(\lambda-1)^{\times 2n_b-1},d'^{\times k}, \mathcal{W}\right)
\eea
At this point we must determine the placement of the term $m-\lambda+2=m-(\lambda-1)+1$. This is certainly larger than $m$ as $\lambda<2$. We must decide whether
\bea
m-(\lambda-1)+1 & > & \lambda-1\\
m+1 & > & 2(\lambda-1).
\eea
To show this, notice that at $b=2$ (approaching from the right), these quantities are equal. Here the $a$-value doesn't matter since the interval $[2n_b+1, (\sqrt{2b}+1)^2]$ has length 0 at $b=2$, so we just take it to be $2n_b+1$.
\bea
(b+n_b)\lambda -2n_b-1 +1 & = & 2(\lambda-1)\\
(2+4)\lambda-9+1 & = & 2(\lambda-1)\\
6\sqrt{\frac{9}{4}}-8 & = & 2(\sqrt{\frac{9}{4}}-1)\\
\frac{18}{2}-\frac{16}{2} & = & 3-2
\eea
At the points of discontinuity, approaching from the right, by Lemma \ref{mestimates} (\ref{mstart}) we have $m=0$. At the same time, $\lambda$ decreases by Lemma \ref{lambdaineq} (\ref{dlambdadb}). Moreover, we observe that by Lemma \ref{mestimates} (\ref{mdb}), $\frac{\partial m}{\partial b}>0$ while $\frac{\partial \lambda}{\partial b}<0$ by Lemma \ref{lambdaineq} (\ref{dlambdadb}). So this inequality always holds.

Thus, after re-ordering, we get
\bea
C_{n_b+1} & : &\left(2m+1;m+1-(\lambda-1),(\lambda-1)^{\times 2n_b-1},m^{\times 3},d'^{\times k}, \mathcal{W}\right)
\eea

The defect is
\bea
\delta_{n_b+1} & = & 2m +1-m+\lambda-2-2\lambda+2\\
& = & m-(\lambda-1)
\eea
which is again negative. So we apply another Cremona transformation:
\bea
C_{n_b+2} & : &\left(2m+1+m-(\lambda-1);m-\lambda+2+m-(\lambda-1),(\lambda-1)^{\times 2n_b-3},\right.\\
& & \left.m^{\times 5},d'^{\times k}, \mathcal{W}\right)\\
& = & (3m+1-(\lambda-1);2m+1-2(\lambda-1),(\lambda-1)^{\times 2n_b-3},m^{\times 5},d'^{\times k}, \mathcal{W})
\eea
It turns out that the ordering here between the first two terms can go in either direction. Thus we need more cases. 

\subsubsection{$2m+1-2(\lambda-1)$ larger}\label{1lambdalargestA}
In this case the ordering is as above, so the defect is
\bea
\delta_{n_b+2} & = & 3m +1-(\lambda-1)-2m-1+2(\lambda-1)-2\lambda+2\\
& = & m-(\lambda-1)
\eea
This is still negative by assumption.

\bea
C_{n_b+3} & : &\left(3m+1-(\lambda-1)+m-(\lambda-1);2m+1-2(\lambda-1)+m-(\lambda-1),\right.\\
& & \left.(\lambda-1)^{\times 2n_b-5},m^{\times 7},d'^{\times k}, \mathcal{W}\right)\\
& = & (4m+1-2(\lambda-1);3m+1-3(\lambda-1),(\lambda-1)^{\times 2n_b-5},m^{\times 7},d'^{\times k}, \mathcal{W})
\eea

and again we see variations in ordering. The second variation in ordering is covered in the next subsection. Here we assume that the above holds, and hence the defect is
\bea
\delta_{n_b+3} & = & 4m+1-2(\lambda-1)-3m-1+3(\lambda-1)-2(\lambda-1)\\
& = & m-(\lambda-1)
\eea
again negative. Positivity of entries follows from the assumptions of this section. So, we continue with Cremona transformation.

\bea
C_{n_b+4} & : &\left(4m+1-2(\lambda-1)+m-(\lambda-1);3m+1-3(\lambda-1)+m-(\lambda-1),\right.\\
& & \left.(\lambda-1)^{\times 2n_b-7},m^{\times 7},d'^{\times k}, \mathcal{W}\right)\\
& = & (5m+1-3(\lambda-1);4m+1-4(\lambda-1),(\lambda-1)^{\times 2n_b-7},m^{\times 9},d'^{\times k}, \mathcal{W})
\eea

We'll continue this until we get to just 1 copy of $(\lambda-1)$, so overall $n_b$ times to get

\bea
C_{n_b+n_b} & : &\left((n_b+1)m+1-2(\lambda-1)+m-(\lambda-1);n_bm+1-n_b(\lambda-1),(\lambda-1),\right.\\
& & \left.m^{\times 7},d'^{\times k}, \mathcal{W}\right)\\
& = & ((n_b+1)m+1;n_bm+1-n_b(\lambda-1),(\lambda-1),m^{\times 2n_b+1},d'^{\times k}, \mathcal{W})
\eea

Checking the ordering, we want to know whether
\be
n_bm + 1 -n_b(\lambda-1)>\lambda-1
\ee

and both orderings occur. The defect given the above ordering is
\bea
\delta_{n_b+n_b} & = & (n_b+1)m+1-n_bm-1+n_b(\lambda-1)-(\lambda-1)-m\\
& = & (n_b-1)(\lambda-1)
\eea
which is strictly positive. 

In the other case, all that matters is whether 
\bea
n_bm + 1 -n_b(\lambda-1) & > & m\\
(n_b-1)m + 1 & > & n_b(\lambda-1)
\eea
and both orderings occur. If the $m$ term is larger, then the defect is 
\bea
\delta_{n_b+n_b} & = & (n_b+1)m+1-(\lambda-1)-2m\\
& = & (n_b-1)m+1-(\lambda-1)
\eea
To estimate this, we need
\bea
(n_b-1)m+1 & > & \lambda-1\\
(n_b-1)m & > & \lambda-2
\eea
but the right-hand side is strictly negative by Lemma \ref{lambdaineq} (\ref{lambdalowerbound}), while the left-hand side is no less than 0 by Lemma \ref{mestimates} (\ref{mstart}). So this is positive.

Lastly, if the longer $m$ term is larger, then we get

\bea
\delta_{n_b+n_b} & = & (n_b+1)m+1-(\lambda-1)-n_bm-1+n_b(\lambda-1)-m\\
& = & (n_b-1)(\lambda-1)
\eea
which is also strictly positive.

\subsubsection{$\lambda-1$ larger}\label{1lambdalargestB}
In this case the ordering is
\bea
C_{n_b+2} & : & (3m+1-(\lambda-1);(\lambda-1)^{\times 2n_b-3}||,2m+1-2(\lambda-1),m^{\times 5},d'^{\times k}, \mathcal{W})
\eea
By assumption $\lambda-1 > 2m+1-2(\lambda-1)$, but it turns out this cannot happen.
\begin{lem}\label{lambdaless2m}
The above ordering is impossible; that is, we cannot simultaneously have $3(\lambda-1)>2m+1$ and $d>1$.
\end{lem}

\begin{proof}





Recall that $d=a-2n_b-1$, so the above assumptions are equivalent to 
\bea
\sqrt{\frac{2n_b+2}{2b}} < \frac{4n_b-2}{2b+2n_b-3}.
\eea
However, it turns out that the opposite inequality holds. Equivalently, the bound we want is
\be
\frac{2n_b+1}{2b} > \left(\frac{4n_b-2}{2b+2n_b-3}\right)^2.
\ee
This is equivalent to the inequality
\be
8n_b^3+8b^2(n_b+1)+18 > 16n_b^2+8b(2n_b^2-3n_b+4)+6n_b
\ee
at which point we apply the lower and upper bounds for the floor function in $n_b$ to obtain the stronger inequality
\bea
& & 8(b+\sqrt{2b}-1)^3+8b^2(b+\sqrt{2b})+18\\
& > & 16(b+\sqrt{2b}+1)+8b(2(b+\sqrt{2b}+1)^2-3(b+\sqrt{2b}+1)+4)+6(b+\sqrt{2b}+1)
\eea
which simplifies to
\be
504b^{3}-40b^{2}+(168\sqrt{2}b^{2}-54\sqrt{2}b-19\sqrt{2})\sqrt{b}-22b-23>0
\ee
and this holds true for all $b>1$. 
\end{proof}

\subsection{The ordering $1>\lambda-1>d'>m$}\label{1lambdadm}
In this case we see that the ordering is
\be
C_{n_b}:\left(m+\lambda;1,(\lambda-1)^{\times 2n_b+1},d'^{\times k},m,\mathcal{W}\right).
\ee
The defect is
\bea
\delta_{n_b} & = & m+\lambda-1-2\lambda+2\\
& = & m -(\lambda-1)
\eea
which is negative by assumption, so we perform another Cremona.
\bea
C_{n_b} & : & \left(m+\lambda+m -(\lambda-1);||1+m -(\lambda-1),((\lambda-1)+m -(\lambda-1))^{\times 2},\right.\\
& & \left.(\lambda-1)^{\times 2n_b-1},d'^{\times k},m,\mathcal{W}\right)\\
& = & (2m+1; (\lambda-1)^{\times 2n_b-1}, d'^{\times k}, m^{\times 3}, \mathcal{W})
\eea
The defect is
\bea
\delta & = & 2m+1 -3(\lambda-1)
\eea
However, if this is negative, we cannot also have $d>1$ by Lemma \ref{lambdaless2m} in the previous section. Thus we are finished.

\section{The reduction method: $1^{\times 2n_b+2}\not\subset \w(a)$ and $b>2$}\label{reductiond<1}
Here we cover the second branch of possibilities, where the weight expansion terms are no larger than 1. Thus $d\leq 1$ but all the same relative orderings of the remaining terms could occur. The table below summarizes the cases needed.

\begin{center}
\begin{tabular}{|c|c|c|c|c|c}
\hline
Ordering     & Case & Section  \\
\hline
$d>m>\lambda-1$ & $k'\geq3$ & \ref{dmlambdakp3}\\
$d>m>\lambda-1$ & $k'=2$ & \ref{dmlambdakp2}\\
$d'>m>\lambda-1$ & $k'=1$, $m\leq d'$ & \ref{dmlambdak1mdp}\\
$d'>m>\lambda-1$ & $k'=1$, $d'\leq m$ & \ref{dmlambdak1dpm}\\
$d'>\lambda-1>m$ & - & \ref{dlambdam}\\
$m>d>\lambda-1$ & - & \ref{mdlambda}\\
$m>\lambda-1>d$ & - & \ref{mlambdad}\\
$\lambda-1>d>m$ & - & \ref{lambdadm}\\
$\lambda-1>m>d$ & - & \ref{lambdamd}\\
\hline
\end{tabular}
\end{center}

\subsection{The ordering $d>m>\lambda-1$}\label{dmlambda}
\subsubsection{Case 1: $d\leq \frac{1}{3}$, so $k\geq 3$.}\label{dmlambdakp3}
In this case, remaining terms in the weight expansion do not contribute.
\be
C_{n_b+1} : (m+\lambda;d^{\times k}||m,(\lambda-1)^{\times 2n_b+1},\mathcal{W})\nonumber
\ee
with defect
\bea \delta_{n_b+1} & = & m+\lambda-3d'\newline\nonumber\\
& = & \lambda -3d'.\nonumber
\eea
This is non-negative as $\lambda-1>0$ and $3d\leq 1$ by assumption.\newline\indent

\subsubsection{Case 2: $\frac{1}{3}<d\leq \frac{1}{2}$, so $k=2$}\label{dmlambdakp2}
In this case we must consider the effect of $d'$ terms. If $d'>m$, then
\be
C_{n_b+1} : (m+\lambda;d,d,d'^{\times k'},m,(\lambda-1)^{\times 2n_b+1},\mathcal{W})\nonumber
\ee
with defect
\bea \delta_{n_b+1} & = & m+\lambda-2d-d'\nonumber.
\eea

Since $d'=1-2d = 1-2(a-2n_b-1)$ 
\bea
2d+d' & = & 2(a-2n_b-1)+1-2(a-2n_b-1)\\
& = & 1
\eea

so this defect is positive as $\lambda-1>0$.

If $d'<m$, then
\be
C_{n_b+1} : (m+\lambda;d,d,m,d'^{\times k'},(\lambda-1)^{\times 2n_b+1},\mathcal{W})\nonumber
\ee
with defect
\bea
\delta_{n_b+1} & = & m+\lambda-2d-m=\lambda-2d\nonumber.
\eea
which is positive by the argument of \ref{dmlambdakp3}.

\subsubsection{Case 3: $d \geq \frac{1}{2}$ and $d'\geq m$}\label{dmlambdak1mdp}
\be
C_{n_b+1}:(m+\lambda;d,d'^{\times k'},m, (\lambda-1)^{\times 2n_b+1},\mathcal{W})\nonumber
\ee
and with the assumed ordering, we have defect
\bea \delta_{n_b+1} & = & m+\lambda-2d-d'\newline\nonumber\\
& = & m+\lambda-2d-d'.\nonumber
\eea
which is again positive by the argument of \ref{dmlambdakp3}.

\subsubsection{Case 4: $d \geq \frac{1}{2}$ and $d'\leq m$}\label{dmlambdak1dpm}
\be
C_{n_b+1} : (m+\lambda;d,m||d'^{\times k'},(\lambda-1)^{\times 2n_b+1}, \w(1-d',d'))\nonumber
\ee
It now matters whether or not $\lambda-1$ appears as the third term in the defect. When $\lambda-1>d'$, the defect is
\bea
\delta_{n_b+1} & = & m+\lambda-d'-m-(\lambda-1)\newline\nonumber\\
& = & 1-d'.\nonumber
\eea
which is positive as $d'< 1$.\newline\indent
Similarly if $d'>\lambda-1$ the defect is
\bea \delta_{n_b+1} & = & m+\lambda-d-m-d'\newline\nonumber\\
& = & \lambda -d'-d'\nonumber
\eea
which is positive by the argument of \ref{dmlambdakp3}.

\subsection{The ordering $d>\lambda-1>m$}\label{dlambdam}
In this case we see that the ordering is
\be
C_{n_b}:\left(m+\lambda;d^{\times k},(\lambda-1)^{\times 2n_b+1},m,\mathcal{W}\right).
\ee
The defect is
\bea
\delta_{n_b} & = & m+\lambda-3(\lambda-1)\\
& = & m+1 - 2(\lambda-1)
\eea
However, $m+1>2(\lambda-1)$ for all $b>2$ by the lemma below, so this is positive. Thus we are finished.

\begin{lem}\label{mlarger2lambda}
For $b>2$, $m-2(\lambda - 1)+1>0$.
\end{lem}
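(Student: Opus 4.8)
The plan is to reduce the asserted inequality to an elementary estimate on $n_b$. Substituting $m=(b+n_b)\lambda-(2n_b+1)$ gives the identity
\[
m-2(\lambda-1)+1=(b+n_b-2)\lambda-2(n_b-1),
\]
so it suffices to show the right-hand side is positive. For $b>2$ we have $n_b\geq\floor{b}\geq 2$, hence $b+n_b-2>2>0$; thus the right-hand side is an increasing affine function of $\lambda$, and since $\lambda=\sqrt{\frac{a}{2b}}$ increases with $a$, it is enough to verify positivity at the left endpoint $a=RF$ of the interval $[RF,(\sqrt{2b}+1)^2]$.

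At $a=RF$ the volume constraint meets the (constant for $a\geq 2n_b+1$) obstruction value of the class $E_{n_b}$, so by Lemma 3.1 we have $\lambda=\sqrt{\frac{RF}{2b}}=\frac{2n_b+1}{n_b+b}$. Plugging this in and clearing the positive factor $n_b+b$, the inequality $(b+n_b-2)\lambda-2(n_b-1)>0$ becomes, after expanding, the equivalent statement
\[
3b-n_b-2>0,\qquad\text{that is,}\qquad n_b<3b-2.
\]

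To finish, I would prove $n_b<3b-2$ for all $b\geq 2$. Writing $\floor{b}=b-\{b\}$ and using $\ceil{y}<y+1$ gives
\[
n_b=\floor{b}+\ceil{\sqrt{2b}+\{b\}}-1<(b-\{b\})+(\sqrt{2b}+\{b\})=b+\sqrt{2b},
\]
so it suffices to check $b+\sqrt{2b}\leq 3b-2$, i.e.\ $\sqrt{2b}\leq 2b-2$. Since $2b-2>0$ for $b>1$, squaring shows this is equivalent to $4b^2-10b+4\geq 0$, i.e.\ $(2b-1)(b-2)\geq 0$, which holds for $b\geq 2$. Chaining $n_b<b+\sqrt{2b}\leq 3b-2$ completes the argument.

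The only delicate point is the first reduction: one must notice that the quantity is monotone in $a$ and check that $b+n_b-2>0$ so that the monotonicity has the right sign, after which evaluating the single value $a=RF$ reduces everything to $n_b<3b-2$. The remaining estimates are routine.
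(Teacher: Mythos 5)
Your proof is correct and follows essentially the same strategy as the paper's: reduce to the endpoint $a=RF$ by monotonicity in $a$ (your observation that the coefficient $b+n_b-2$ of $\lambda$ is positive is equivalent to the paper's comparison $\frac{\partial m}{\partial a}>2\frac{\partial\lambda}{\partial a}$, both amounting to $b+n_b>2$), and then verify the inequality there. The only difference is that at the endpoint the paper invokes $m=0$ together with the bound $\lambda<\frac{3}{2}$ from Lemma 3.3, whereas your condition $n_b<3b-2$ is precisely that bound at $a=RF$, which you prove directly from the definition of $n_b$ — a self-contained but equivalent verification.
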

\begin{proof}
This inequality is equivalent to $m+1>2(\lambda-1)$. When $a=RF$, we have $m=0$, so this reduces to $\frac{1}{2}>\lambda-1$ which holds for all $b>2$. So it suffices to show that $\frac{\partial m}{\partial t}>2\frac{\partial \lambda}{\partial t}$, so the inequality still holds.\newline\indent
We computed $\frac{\partial m}{\partial t}$ in Lemma \ref{mestimates} (\ref{mdt}) and $\frac{\partial \lambda}{\partial t}$ in Lemma \ref{lambdaineq} (\ref{dlambdadt})
so the claim becomes
\bea
& & (b+n_b)\frac{1}{\sqrt{2b}}\left(\frac{1}{2\sqrt{(1-t)RF(b)+tV(b)}}\right)\left(-RF(b)+V(b)\right)\\
& > & 2\cdot\frac{1}{\sqrt{2b}}\left(\frac{1}{2\sqrt{(1-t)RF(b)+tV(b)}}\right)\left(-RF(b)+V(b)\right)
\eea
which is in fact true for all $b>1$.
\end{proof}

\subsection{The ordering $m>d>\lambda-1$}\label{mdlambda}
It will turn out that the ordering
\be
C_{n_b}:\left(m+\lambda;m,d||\left(\lambda-1\right)^{\times 2n_b+1},  \mathcal{W}\right)\label{mbig}
\ee
never happens for any $a,b$ in the intervals of interest. 
\begin{lem}
For any $a\in [RF, (\sqrt{2b}+1)^2]$ and $b\not\in\{u(n)\}$, we have $m<d$.
\end{lem}
\begin{proof}
We show that for $\lambda=\sqrt{\frac{a}{2b}}$,

\bea
(b+n_b)\lambda - 2n_b-1 & < & (1-t)RF(b)+tV(b)-2n_b-1\\
(b+n_b)\lambda & < & (1-t)RF(b)+tV(b)\\
(b+n_b)\sqrt{\frac{a}{2b}} & < & a\\
\frac{b+n_b}{\sqrt{2b}} & < & \sqrt{a}\\
\eea

Minimizing the right-hand side at $a=RF$, we obtain

\bea
\frac{b+n_b}{\sqrt{2b}} & \leq & \sqrt{a}\\
\frac{b+n_b}{\sqrt{2b}} & \leq & \sqrt{2b}\frac{2n_b+1}{b+n_b}\\
\frac{b+n_b}{\sqrt{2b}} & \leq & \sqrt{2b}\frac{2n_b+1}{b+n_b}\\
(b+n_b)^2 & \leq & 2b(2n_b+1)
\eea

This is saturated at $\{u(n)\}$. To prove the inequality, then, we differentiate both sides.


\bea
2(b+n_b) & < & 2(2n_b+1)\\
b+n_b & < & 2n_b+1
\eea

which is immediate from Lemma \ref{basicineq} (\ref{blessnb}).

\end{proof}
It follows that for any $a, b$ where $(b+n_b)\lambda - (2n_b+1)\neq a-2n_b-1$, the ordering in Section \ref{mbig} does not occur.

\subsection{The ordering $m>\lambda-1>d$}\label{mlambdad}
See Section \ref{mbig}.

\subsection{The ordering $\lambda-1>d>m$}
\label{lambdadm}
Now suppose $\lambda-1$ is the largest term. In this case, the weight vector looks like
\bea
C_{n_b} &:& \left((b+n_b+1)\lambda-(2n_b+1);\left(\lambda-1\right)^{\times 2n_b+1}, \right.\newline\nonumber\\
& &\left. (b+n_b)\lambda-(2n_b+1)\right), \w(a-2n_b-1))\nonumber
\eea
This has defect
\bea
\delta_{n_b} & = & (b+n_b+1)\lambda-(2n_b+1)-3\lambda+3\newline\nonumber\\
& = & (b+n_b-2)\lambda -(2n_b-2) = m -2\lambda+3. \nonumber
\eea
To check the sign of this defect, we re-arrange into $m-2(\lambda -1)+1$. By Lemma \ref{mlarger2lambda}, this is positive.

\subsection{The ordering $\lambda-1>m>d$}\label{lambdamd}
The argument of Section \ref{lambdadm} applies here as well.

\section{Obstructive classes when $b\in\{\frac{n+1}{n}\}$ and $a=8$}\label{bnsequence}
At this point, we turn our attention towards the smaller values of $b$, with an eye towards the proof of Theorem \ref{theoremB}. 


\begin{prop}
Let $b_n=\frac{n+1}{n}$. If $n\geq 9$, then $RF(b_n)\leq 9$.\newline\indent
\end{prop}
\begin{proof}
Let $a\geq 9$. As usual, our beginning weight vector has the form
\be
\left((b+1)\lambda;b\cdot\lambda, \lambda, 1^{\times 9}, \w(a-9)\right).\nonumber
\ee
and $\delta = -1$ in the first step, so we see:
\be
\left((b+1)\lambda - 1;b\cdot\lambda -1, \lambda - 1, 1^{\times 8}, \w(a-9)\right).\nonumber
\ee
Note that $\lambda-1 = \sqrt{\frac{a}{2b}} - 1>1$ for $a\geq9$, as this is equivalent to

\bea
1 & < & \sqrt{\frac{a}{2b}}-1\\
2 & < & \frac{3}{\sqrt{2b}}\\
\sqrt{2b} & < & \frac{3}{2}\\
2b & < & \frac{9}{4}\\
b & < & \frac{9}{8}
\eea

If $n \geq 9$ in $b_n = \frac{n+1}{n}$, then this holds, so the defect is
\be
\delta = (b+1)\lambda-1-b\lambda+1-\lambda+1-1 = 0 \nonumber
\ee
guaranteeing an embedding. 
\end{proof}

It follows that we need only consider $a$-values up to $9$. 

\subsection{Eliminating Possible Classes on $(8,9)$}
At this point, our argument essentially follows the lines of \cite{CFS}: for the embedding problem $E(1,8)\hookrightarrow P(\lambda, \lambda\cdot \frac{n+1}{n})$, we find solutions $(d,e;m)$ of non-negative integers to the Diophantine equations  (\ref{dioph1}) and (\ref{dioph2}), subject to the constraint given by Theorem \ref{classcondition}.
To do this, we restrict the set of possible obstructive classes on $(8,9)$. This is accomplished by an analogue of \cite[Prop. 5.2.1]{McSc} or \cite[Prop. 3.6]{CFS}. \\

\begin{prop}\label{no89classes}
For $b_n = \frac{n+1}{n}$ ($n\geq 8$), there are no exceptional classes $(d,e;\m)$ above the volume constraint on $(8,9)$ such that $\ell(a)=\ell(\m)$.
\end{prop}

In the following section, we then identify the necessary classes at $a=8$ to prove Theorem \ref{theoremB}, and show they satisfy the appropriate conditions.


We will use Lemma \ref{errorestimates} (4) to bound the value of $e$. Then, using computer programs adapted from those in \cite{McSc, FrMu, BPT}, we enumerate possible classes $(d,e;\textbf{m})$ with $e$ no greater than this bound. It is then straightforward to verify that none of these classes are obstructive on the interval in question, giving the desired result.\newline\indent

\begin{proof}[Proof of Proposition \ref{no89classes}]

Now we use the estimates of Lemma \ref{errorestimates} (\ref{vmsigmabounds}) to obtain
\bea
v_M\in\left[\frac{1}{3}, \frac{1}{2}\right] & \rightarrow & \frac{\sigma'}{v_M} \leq \frac{3}{2}\newline\nonumber\\
v_M\in\left[\frac{1}{2}, \frac{2}{3}\right] & \rightarrow & \frac{\sigma'}{v_M} \leq \frac{14}{9}\newline\nonumber\\
v_M\geq\frac{2}{3} & \rightarrow & \frac{\sigma}{v_M} \leq \frac{3}{2}\newline\nonumber\\
\eea
Note that in the first two cases we may replace $\sigma'$ by $\sigma$ since $v_M<1$. Then for fixed $q$ and $h$, the above estimates allow us to define the following functions from (\ref{ebound}), replacing $\sigma$ and $v_M$. For $F$, we use that $\sigma<1$ since $\langle \epsilon, \epsilon\rangle<1$. For $G$, we use the largest upper bound above for $\frac{\sigma}{v_M}$.
\bea
F(a,q,h) & := & \frac{\sqrt{2ba}}{\delta}\left(\sqrt{q} - (1-h(1-\frac{1}{b})\right)\\
G(a,q,h) & := &  \frac{\sqrt{2ba}}{\delta}\left(\frac{14}{9\delta}-\left(1-h(1-\frac{1}{b}\right)\right)
\eea
Let $f(q,h):= F(8\frac{1}{q},q,h)$ and $g(q,h):=G(8\frac{1}{q},q,h)$. We choose these values for $a$ because $F,G$ are decreasing on $a\in(8,9)$ (recall that $\delta=y(a)-\frac{1}{q}$ depends on $a$ and $q$).\newline\indent

Then we have by Lemma \ref{degfacts}
\be
2be+h\leq f(q,h)\leq g(q,h).\nonumber
\ee

\begin{lem}\label{fgestimates}
For $b\in(1,\frac{10}{9}]$ and $|h|<\sqrt{2b}$, we have the following. 
\begin{enumerate}
\item $1<q_0<4$, where $q_0$ is the $q$-value where $f(q,h)= g(q,h)$\label{qbound}
\item $\frac{\partial f}{\partial q}>0$ and $\frac{\partial g}{\partial q}<0$ for $q>1$
\item $\frac{\partial f}{\partial h}, \frac{\partial g}{\partial h} > 0$
\item $g(2, h)<6.$
\end{enumerate}
\end{lem}
\begin{proof}
For (1), note that $f(q,h)$ and $g(q,h)$ are equal if and only if $\sqrt{q}=\frac{14}{9}\frac{1}{\delta(q,b)}$, which amounts to 
\be
\sqrt{q}=\frac{14}{9}\cdot \frac{1}{9-\sqrt{8\frac{1}{q}}\frac{2(b+1)}{\sqrt{2b}}}.\nonumber
\ee
This is a quadratic in $q$, and since $b\in (1,1\frac{1}{8}]$, we find that $q$ ranges between 3.33 and 3.654. Hence the intersection point $q_0$ is no larger than 3.\newline\indent

(2) is a straightforward computation from the following formulas.

\bea
\frac{\partial f}{\partial q} & = & \frac{\partial}{\partial q}\left(\frac{\sqrt{2ba}}{\delta}\left(\sqrt{q} - (1-h(1-\frac{1}{b})\right)\right)
\eea

\bea
\frac{\partial g}{\partial q} & = & \frac{\partial}{\partial q}\left(\frac{\sqrt{2ba}}{\delta}\left(\frac{14}{9\delta}-\left(1-h(1-\frac{1}{b}\right)\right)\right)
\eea

(3) is immediate as the coefficients of $h$ in $f, g$ are strictly positive for all $q, b$. Moreover, $\frac{\partial f}{\partial h}, \frac{\partial g}{\partial h}$ are constant in $h$, so as $h$ changes, $f,g$ increase linearly. \newline\indent
For (4), by (3) we simply evaluate at the largest possible value of $h$, which is $h=\sqrt{2b}$. This gives the bound.
\end{proof}

So, by (\ref{qbound}) in Lemma \ref{fgestimates} above, for $q\geq 4$, the inequality (\ref{ebound}) fails, and hence no class can be obstructive at those $q$-values. It remains only to check those values $q=2,3$. We have for all $h$ that
\be
f(3, h), g(3,h) < 6
\ee
so we must verify that for $e\in\{1,2,3,4,5\}$ all the classes with $e$ in this range fail to be obstructive. This can be done with a direct calculation using the obstruction functions.

\begin{table}[h]
\begin{tabular}{|l|ll}
\cline{1-1} \cline{3-3}
Candidate classes for $q=2$                         & \multicolumn{1}{l|}{} & \multicolumn{1}{l|}{Candidate classes for $q=3$}                         \\ \cline{1-1} \cline{3-3} 
$(4,4;4, 3, 1^{\times 8})$                          & \multicolumn{1}{l|}{} & \multicolumn{1}{l|}{$(5,1;1^{\times 11})$}                               \\ \cline{1-1} \cline{3-3} 
$(5,4;4, 3, 3, 1^{\times 7})$                       & \multicolumn{1}{l|}{} & \multicolumn{1}{l|}{$(5,4;4^{\times 2}, 1^{\times 9})$}                  \\ \cline{1-1} \cline{3-3} 
$(5,5;4, 4, 3, 2, 1^{\times 6})$                    & \multicolumn{1}{l|}{} & \multicolumn{1}{l|}{$(5,5;5, 3^{\times 2}, 1^{\times 8})$}               \\ \cline{1-1} \cline{3-3} 
$(5,5;5, 3, 2^{\times 3}, 1^{\times 5})$            & \multicolumn{1}{l|}{} & \multicolumn{1}{l|}{}               \\ \hline
\end{tabular}
\end{table}

To show some sample computations, let $q=2$ and $e=4$. Then the computer search verifies that there is a single class with the same length 10 as the weight expansion of $8\frac{1}{2}$, namely

\be
(4,4;4,3, 1^{\times 8}).
\ee

Its obstruction function at $a=8\frac{1}{2}$ is $\frac{14}{4+4b}$. However, for $b\in[1,2]$, we always have $\frac{14}{4+4b}<\sqrt{\frac{8\frac{1}{2}}{2b}}$, so this class is not obstructive.

Similarly, let $q=3$ and $e=1$. Then there is a single class of length 11 which matches the weight expansion of $8\frac{1}{3}$ (and that of $8\frac{2}{3}$), namely
\be
(5,1;1^{\times 11}).
\ee

Its obstruction function at $a=8\frac{1}{3}$ is $\frac{9}{5+b}$. However for $b\in[1,2]$ we always have $\frac{9}{5+b}<\sqrt{\frac{8\frac{1}{3}}{2b}}$, so this class is not obstructive. At $a=8\frac{2}{3}$, the obstruction function is $\frac{9\frac{1}{3}}{5+b}$, and also $\frac{9\frac{1}{3}}{5+b}<\sqrt{\frac{8\frac{2}{3}}{2b}}$.

Continuing through the list of possible classes, we see that none of these are obstructive for $b\in[1,2]$.

\end{proof}

\subsection{Obstructive Classes at $a=8$, $b_n = \frac{n+1}{n}$}
To establish the $RF$-value of $b_n = \frac{n+1}{n}$, we now show that for each $n$ there is only one possible obstructive class at $a=8$. We define the following infinite family of classes
\bea
R_n & := & ((2n+1)(n+1),(2n+1)n; \frac{1}{8}(2(2n+1)^2 +6), \frac{1}{8} [(2(2n+1)^2+6)-1]^{\times 7}) \\ 
& = & ((2n+1)(n+1),(2n+1)n; n^2+n+1,  (n^2+n)^{\times 7}).
\eea
Changing coordinates to those of $X_n$, this becomes:
\begin{equation}
(3n^2+3n; n^2 + 2n, (n^2+n)^{\times 7},n^2-1).\label{rnconverted}
\end{equation}
If this vector can be reduced to $(0;-1,0, \ldots)$ via  Cremona transforms, then the class will be effective.

\begin{lem}
For $n>2$, the obstructive class $R_n$ from (\ref{rnconverted}) reduces to $(0;-1)$ after $4n+1$ Cremona transforms. 
\end{lem}

\begin{proof}

After 11 Cremonas, the vector (\ref{rnconverted}) becomes

\[
C_{11} = (3n^2 - 13n + 14; ((n-2)^2)^{\times 5}, (n-2)^2 - 1, (n-2)(n-3)^{\times 3}).
\]

Note that at $n=2$, the minimal $n$ for which this sequence makes sense, this vector is already $(0;-1)$ as needed. 

Once $n>2$, we claim that $4n+1$ total Cremona transformations are needed to obtain $(0;-1)$. To see this, we induct on $n$. The base case follows from direct computation. For the induction, we observe that applying 5 Cremona transformations to

\bea
& & (3(n+1)^2+3(n+1);(n+1)^2+2(n+1),((n+1)^2+n+1)^{\times 7}, (n+1)^2-1)\\
& = & (3n^2+9n+6;n^2+4n+3,(n^2+3n+2)^{\times 7}, (n^2+2n))\\
\eea

results in the same vector as applying one Cremona transformation to

\bea
& & (3n^2+3n;n^2+2n,(n^2+n)^{\times 7}, n^2-1)\\
\eea

so the result follows.

\end{proof}
Finally, we show that the $R_n$ are the only possible classes determining the $RF$-value at $a=8$.
\begin{prop}
The only exceptional class with $\mu_{b_n}(8)>\sqrt{\frac{8}{2\frac{n+1}{n}}}$ is
\be
((2n+1)n, (2n+1)(n+1); n^2+n+1, (n^2+n)^{\times 7}).\nonumber
\ee
\end{prop}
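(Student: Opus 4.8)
The plan is to prove the proposition in two halves --- first that the class $R_n$, with $d=(2n+1)(n+1)$, $e=(2n+1)n$ and $\m=(n^2+n+1,(n^2+n)^{\times 7})$, is exceptional and obstructive at $a=8$, and then that it is the only exceptional class with $\mu_{b_n}(8)>\sqrt{8/(2b_n)}$. For the first half, a direct computation gives $\sum_i m_i=8(n^2+n)+1=2(d+e)-1$ and $\sum_i m_i^2=(n^2+n+1)^2+7(n^2+n)^2=2de+1$, so $R_n$ satisfies \eqref{dioph1}--\eqref{dioph2}, while Lemma~5.8 exhibits a chain of Cremona moves reducing it to $(0;-1,0,\dots)$, so $R_n$ is exceptional by the Cremona-reduction criterion (Theorem~2.3). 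Since $\w(8)=(1^{\times 8})$ and $d+b_ne=2(2n+1)(n+1)$, one has $\mu_{b_n}(R_n)(8)=\tfrac{8n^2+8n+1}{2(2n+1)(n+1)}$; comparing squares with $\tfrac{8}{2b_n}=\tfrac{4n}{n+1}$ reduces, after clearing denominators, to the identity $(8n^2+8n+1)^2=16n(n+1)(2n+1)^2+1$, so indeed $\mu_{b_n}(R_n)(8)>\sqrt{8/(2b_n)}$.

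For the second half, let $(d,e;\m)$ be any exceptional class obstructive at $a=8$. Since $c_{b_n}$ is continuous and exceeds the volume constraint at $a=8$, this class is obstructive on a maximal open interval $I\ni 8$; by the localization lemma (Lemma~5.2) there is a unique $a_0\in I$ with $\ell(a_0)=\ell(\m)$, and $\ell(a)\ge\ell(a_0)$ for all $a\in I$, so $\ell(\m)\le\ell(8)=8$. If $\ell(\m)=8$, then since $8\in I$ and $\ell(8)=\ell(\m)$, uniqueness forces $a_0=8$. If instead $\ell(\m)\le 7$, then $a_0\ne 8$; it cannot lie in $(8,9)$ --- there $\ell(a)\ge 9>\ell(\m)$, and in any case Lemma~5.6 forbids obstructive classes of matching length on $(8,9)$ --- and it cannot satisfy $a_0\ge 9$, for then $I\supseteq[8,9]$ and $c_{b_n}$ would exceed the volume constraint throughout $(8,9)$, contradicting the reduction-method bound $RF(\beta_n)\le 9$; hence $a_0<8$. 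But then $\mu_{b_n}(d,e;\m)(8)=\tfrac{2(d+e)-1}{d+b_ne}$, since all eight weights of $\w(8)$ equal $1$ while $\m$ has at most seven positive entries, and Cauchy--Schwarz, $(2(d+e)-1)^2\le 7(2de+1)$, forces $d+e\le 9$; a direct inspection of the finitely many exceptional classes of that size shows none is obstructive at $a=8$. Therefore $a_0=8$, and $\m$ has exactly eight positive entries.

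Next I would classify such classes. Since $\w(8)=(1^{\times 8})$ is a single block on which the weight sequence is constant, Lemma~\ref{lem27} forces $(m_1,\dots,m_8)$ to be either $(m^{\times 8})$ or seven copies of an integer $m$ together with one entry $m\pm 1$. The first is impossible, since $\sum_i m_i=8m$ is even whereas $2(d+e)-1$ is odd. For the shape $((m-1),m^{\times 7})$, \eqref{dioph1}--\eqref{dioph2} give $d+e=4m$ and $de=4m^2-m$, hence $d,e=2m\pm\sqrt m$, forcing $m=j^2$ and $\{d,e\}=\{j(2j+1),\,j(2j-1)\}$. For the shape $((m+1),m^{\times 7})$ they give $d+e=4m+1$ and $de=4m^2+m$, hence $d,e=\tfrac12\bigl((4m+1)\pm\sqrt{4m+1}\bigr)$, forcing $m=\ell^2+\ell$ and $\{d,e\}=\{(2\ell+1)(\ell+1),\,(2\ell+1)\ell\}$ --- precisely the family $R_\ell$, with $\ell=n$ giving $R_n$.

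The remaining and hardest step is to show that, for $b=b_n$, none of these candidates except $R_n$ is obstructive at $a=8$. After squaring, obstructiveness of the $((m-1),m^{\times 7})$-class is equivalent to
\[
n(n+1)(8j^2-1)^2>4j^2(4nj+2j-1)^2,
\]
and of the $R_\ell$-class to
\[
n(n+1)(8\ell^2+8\ell+1)^2>4(4\ell^2+4\ell+1)(2n\ell+n+\ell)^2 .
\]
The plan is to show that the first inequality fails for every $j\ge 1$ --- at $j=n$ the right side already exceeds the left by $n(3n-1)$ --- and that the second fails for every $\ell\ne n$ while holding at $\ell=n$, where the two sides differ by exactly $n(n+1)$. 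For the second, set $f(\ell)$ equal to the difference of the two sides: this is a quartic in $\ell$ with positive leading coefficient $16\ell^4$ and with $f(n)=-n(n+1)<0$, so it suffices to verify $f(n-1)>0$, $f(n+1)>0$ and $f(\ell)>0$ for the remaining small $\ell$, which confines the two real zeros of $f$ to $(n-1,n)\cup(n,n+1)$ and leaves $\ell=n$ as the only positive integer making $R_\ell$ obstructive at $a=8$. Together with the reduction carried out above, this identifies $R_n$ as the unique exceptional class with $\mu_{b_n}(8)>\sqrt{8/(2b_n)}$.
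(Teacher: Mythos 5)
Your strategy for the uniqueness half is genuinely different from the paper's: the paper parametrizes obstructive $(d,e)$ via the linear Diophantine equation $nd-(n+1)e=k$, with $|k|$ bounded by Theorem 5.1(2), and shows $k=0$ by an integrality argument; you instead read $d+e$ and $de$ directly off the Chern/self-intersection equations for each of the two admissible tail shapes and force the discriminant of the resulting quadratic to be a perfect square, producing the two candidate families $\{j(2j+1),j(2j-1)\}$ and $\{(2\ell+1)(\ell+1),(2\ell+1)\ell\}$. That part, the verification that $R_n$ is exceptional and obstructive, and the reduction to tails of length $8$ and block type via Lemmas 5.2, 5.5 and 5.6, are all correct (though you should state explicitly that you take the labelling with $e$ the smaller coefficient, which minimizes $d+b_ne$ and hence maximizes the obstruction, so that ruling it out rules out both labellings).

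The gap is in the step you yourself call the hardest, which is left as a plan whose details, as stated, do not close. For the first family you verify failure of obstructiveness only at $j=n$; the claim for all $j\ge 1$ is exactly what is needed and is not proved. For the second family, the claim that checking $f(n\pm 1)>0$ and finitely many small $\ell$ ``confines the two real zeros of $f$ to $(n-1,n)\cup(n,n+1)$'' is wrong: $f$ is a quartic with \emph{four} real zeros (two of them negative), and sign checks at $n-1,n,n+1$ give no control over $f(\ell)$ for integers $\ell>n+1$ without a further monotonicity or root-counting argument. Both gaps are repairable by an identity you are one substitution away from: with $u=2\ell+1$, $v=2n+1$ one finds $f(\ell)=u^2(u-v)^2-n(n+1)=4(2\ell+1)^2(\ell-n)^2-n(n+1)$, and the analogous difference for the first family is $4j^2(2j-2n-1)^2-n(n+1)$. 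Since $2j\,|2j-2n-1|\ge 2n$ for all $j\ge 1$ (the downward parabola $2j(2n+1-2j)$ is minimized at the endpoints of $[1,n]$), and $2(2\ell+1)|\ell-n|\ge 4n-2$ for every positive integer $\ell\ne n$, both expressions exceed $n(n+1)$, so no class in the first family and no $R_\ell$ with $\ell\ne n$ is obstructive at $a=8$. With that substitution your argument becomes complete; as written it is not.
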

\begin{proof}
As in \cite[Lemma 3.10]{CFS}, the strategy is to examine the Diophantine equations for Chern number $1$ and self-intersection $-1$, and show that these equations have no solutions for the given parameter values. The fact that our parameter values are variable is only a technical complication. \newline\indent
Recall that by \hyperref[errorbound]{Theorem \ref{degfacts} (3)}, for an obstructive class $(d,e;\m)$ we have
\be
d=\frac{n+1}{n}e+h \nonumber
\ee
where $d,e\in\N$ and $|h|<\sqrt{\frac{2n+2}{n}}$. It follows that $h\in \frac{1}{n}\Z$, so we write
\be
d=\frac{(n+1)e}{n}+\frac{k}{n}\nonumber
\ee
or, writing as a Diophantine equation,
\be
nd-(n+1)e = k.\label{diok}
\ee
Since $gcd(n,n+1) = 1$ there are integer solutions of the following form. For the specific equation
\be
xn+y(n+1)=gcd(n,n+1) = 1 \nonumber
\ee
we have $x=-1, y=1$ as solution, and more generally a particular solution $(d,e)$ to \ref{diok} is
\be
n(-k)-(-k)(n+1) = k. \nonumber
\ee
It follows from general theory of linear Diophantine equations that all integer solutions can be constructed from this particular solution as
\begin{eqnarray}
d & = & -k+\frac{-(n+1)l}{gcd(n,n+1)} = -k-(n+1)l\newline \label{leqn1}\\
e & = & -k-\frac{nl}{gcd(n,n+1)} = -k-nl \label{leqn2}
\end{eqnarray}
with $l \in \Z$.  We will show that $k=0$ necessarily for an obstructive class of this form. Using again the fact that we are at $a=8$, we apply Lemma \ref{oneblock} to show that the sum of the $m_i$ for any tail $\m$ must satisfy
\[
  2de+1 =
  \begin{cases}
                                   8m^2+2m+1 & \text{if}\,(m+1,m^{\times 7}) \\
                                   8m^2-2m+1 & \text{if}\, (m^{\times 7}, m-1)
  \end{cases}
\]
and similarly
\[
  2(d+e)-1 =
  \begin{cases}
                                   8m+1 & \text{if}\,(m+1,m^{\times 7}) \\
                                   8m-1 & \text{if}\, (m^{\times 7}, m-1)
  \end{cases}
\]
so we combine these to obtain pairs of Diophantine equations. We treat the case $(m-1, m^{\times 7})$ first.




Now we can substitute (\ref{leqn1}) and (\ref{leqn2}) to see
\bea
2(-k-l(n+1) - k -n) -1 & = & 8m-1\newline\nonumber\\
2(k^2-k(2n+1)+n(n+1)) + 1 & = & 8m^2-2m+1.\nonumber
\eea
Solving for $m$ in the first equation and substituting into the first gives the following polynomial in $l$:
\be
l^2\left(n(n+1)-\frac{1}{4}(2n+1)^2\right) + l\left(\frac{-1}{4}(2n+1)\right)+\frac{1}{2}k = 0. \nonumber
\ee
This quadratic has roots
\be
l=-n-\frac{1}{2}\pm\sqrt{(2n+1)^2+\frac{1}{2}k}. \nonumber
\ee

Now, $l$ must be an integer by properties of the solutions to linear Diophantine equations, and $n$ is an integer. Hence we must have $\sqrt{(2n+1)^2+\frac{1}{2}k}\in \frac{1}{2}\Z$, so $\sqrt{4(2n+1)^2+2k}\in \Z$.

Now, $\left|\frac{k}{n}\right|<\sqrt{2\frac{n+1}{n}}$ from \hyperref[errorbound]{Lemma 5.3 (3)}, so $|k|<\sqrt{2n(n+1)}$.

We show that if $k\neq 0$, $4(2n+1)^2+2k$ is never a perfect square, by estimating the distance between $4(2n+1)^2=(2n+2)^2$ and $(2n+3)^2$, which is $4n^2+12n+9-(4n^2+8n+4)=4n+5$. So $2k$ must at least be that large. But $2k <2\sqrt{2n(n+1)}$, and the right-hand side is strictly less than $4n+5$ for all $n$. So this is impossible, and $k=0$.

But then it follows that 
\be
l=n-\frac{1}{2}\pm(2n+1) \nonumber
\ee
which is certainly not an integer. Thus, we cannot see obstructive classes of this form.\newline\indent
On the other hand, we can consider classes of the form $(m+1, m^{\times 7})$. The same substitution gives the quadratic in $l$
\be
l^2\left(n(n+1)-\frac{1}{4}(2n+1)^2\right) + l\left(\frac{1}{4}(2n+1)\right) - \frac{1}{2}k = 0.\nonumber
\ee
Again, since $l$ must be an integer, we see that $k=0$, in which case this polynomial has roots
\be
l=\frac{(2n+1)\pm 4\sqrt{\frac{1}{16}(2n+1)^2-\frac{1}{2}k}}{2} = 0, 2n+1.\nonumber
\ee
Hence the only possible obstructive classes at $a=8$ are of the form
\be
((2n+1)n, (2n+1)(n+1); n^2+n+1, (n^2+n)^{\times 7})\nonumber
\ee
as needed.
\end{proof}

\bibliographystyle{amsplain}

\begin{thebibliography}{9}


\bibitem{BPT} M. Burkhart, P. Panescu, M. Timmons. \textit{Symplectic Embeddings of Ellipsoids into Polydiscs.}, Involve \textbf{10}, No. 2 (2017), 219-242.

\bibitem{BuPi} O. Buse, M. Pinsonnault. \textit{Packing Numbers of 4-dimensional Rational Ruled Symplectic Manifolds}, arXiv 1104.3362.

\bibitem{CFS} Cristofaro-Gardiner, D., Frenkel, D., Schlenk, F. \textit{Symplectic Embeddings of Four-Dimensional Ellipsoids into Integral Polydiscs},  Alg. Geom. Top. \textbf{17} (2017) 1189-1260.

\bibitem{FrMu} Frenkel, D., Muller, D. \textit{Symplectic Embeddings of Four-Dimensional Ellipsoids into Cubes}, J. Sympl. Geom. \textbf{13} No. 4 (2015) 765 – 847.


\bibitem{KaKe} Y. Karshon, L. Kessler. \textit{Distinguishing Symplectic Blowups of the Complex
Projective Plane}, J. Sympl. Geom. \textbf{15} No. 4 (2017), 1089-1128.

\bibitem{LiLi} B.-H. Li, T.J. Li. \textit{Symplectic Genus, Minimal Genus and Diffeomorphisms}, Asian J. Math. \textbf{6} No. 1 (2002), 123-144.

\bibitem{LiLiu} T.-J. Li and A.-K. Liu. Uniqueness of symplectic canonical class, surface cone and symplectic cone
of 4-manifolds with $b^+ = 1$. J. Differential. Geom. \textbf{58} (2001) 331–370.

\bibitem{McD}  D. McDuff, \textit{Symplectic embeddings of 4-dimensional ellipsoids}, J. Topol. 2
(2009), 1-22.

\bibitem{McPo} D. McDuff, L. Polterovich. \textit{Symplectic packings and algebraic geometry} Invent. Math. \textbf{115} Issue 3 (1994), 405-430.


\bibitem{McSc} D. McDuff, F. Schlenk. \textit{Symplectic Embedding Capacity of 4-Dimensional Symplectic Ellipsoids},
Ann. Math., \textbf{175} (2012), pp. 1191--1282.

\bibitem{Ush} Usher, M. \textit{Infinite Staircases in the Symplectic Embedding Problem for Four-Dimensional Ellipsoids into Polydisks}, arXiv 1801.06762.

\bibitem{CHPM} D. Cristofaro-Gardiner, T. Holm, A. Pires, A. Mandini. \textit{On infinite staircases in toric symplectic 4-manifolds.} arXiv 2004.13062.

\end{thebibliography}

\end{document}